\setlist[itemize]{leftmargin=1cm}
\setlist[enumerate]{leftmargin=1cm}
\theoremstyle{plain}
\newtheorem{theo}{Theorem}[section]
\newtheorem{prop}[theo]{Proposition}
\newtheorem{coro}[theo]{Corollary}
\newtheorem{lem}[theo]{Lemma}
\theoremstyle{definition}
\newtheorem{rem}[theo]{Remark}
\newtheorem{expl}[theo]{Example}
\newcommand{\N}{\mathbb{N}}
\newcommand{\R}{\mathbb{R}}
\newcommand{\C}{\mathbb{C}}
\newcommand{\D}{\mathbb{D}}
\newcommand{\Z}{\mathbb{Z}}
\newcommand{\T}{\mathbb{T}}
\newcommand{\Q}{\mathbb{Q}}
\newcommand{\A}{\mathbb{A}}
\renewcommand{\H}{\mathbb{H}}
\newcommand{\dd}{\mathrm{d}}
\newcommand{\Id}{\mathrm{Id}}
\newcommand{\Hol}{\mathop{\rm Hol}\nolimits}
\newcommand{\Ker}{\mathrm{Ker}}
\newcommand{\Cont}{\mathcal{C}}
\newcommand{\NN}{\mathbb{N}}
\newcommand\Talbe{T_{\alpha,\beta}}
\newcommand{\ZZ}{\mathbb Z}
\newcommand\spam{\mathop{\rm span}\nolimits} 
\newcommand{\norm}[1]{\left\Vert#1\right\Vert}
\newcommand{\abs}[1]{\left\lvert #1 \right\rvert}
\newcommand{\sbt}{\,\begin{picture}(-1,1)(-1,-3)\circle*{3}\end{picture}\ }
\renewcommand{\textbf}[1]{\begingroup\bfseries\mathversion{bold}#1\endgroup}
\begin{document}


\author{I. Chalendar}
\address{Isabelle CHALENDAR, Université Gustave Eiffel, LAMA, (UMR 8050), 
    UPEM, UPEC, CNRS, F-77454, Marne-la-Vallée (France)}
\email{isabelle.chalendar@univ-eiffel.fr}

\author{L. Oger}
\address{Lucas OGER, Université Gustave Eiffel, LAMA, (UMR 8050), 
    UPEM, UPEC, CNRS, F-77454, Marne-la-Vallée (France)}
\email{lucas.oger@univ-eiffel.fr}

\author{J. R. Partington}
\address{Jonathan R. PARTINGTON, School of Mathematics, University of Leeds, Leeds LS2 9JT, Yorkshire, U.K.}
\email{j.r.partington@leeds.ac.uk}

\title[Linear isometries on the annulus]
{Linear isometries on the annulus: description and spectral properties}

\keywords{Fréchet space; holomorphic functions; isometry; annulus; weighted composition operator; spectrum}
\subjclass{Primary 47B33; Secondary 30H05, 47A10}
\thanks{This research is partly supported by the Bézout Labex, funded by ANR, reference ANR-10-LABX-58.}

\begin{abstract}
    We give a complete characterisation of the linear isometries of $\Hol(\Omega)$, where $\Omega$ is the half-plane, the complex plane or an annulus centered at 0 and symmetric to the unit circle.
    Moreover, we introduce new techniques to describe the holomorphic maps on the annulus that preserve the unit circle, and we finish by proving results about the spectra of the linear isometries on the annulus.
\end{abstract}

\maketitle


\section{Introduction}\label{Sec - Introduction}

In order to study a metric linear space, and in particular its geometry, one of the main tools is  an analysis of the linear isometries of this space. For spaces of holomorphic functions, there is a strong connection between isometries and weighted composition operators. \medskip

This link can be traced back to Banach himself \cite{Banach} in the space $\Cont(K)$ of continuous real-valued functions on a compact metric space $K$. For most well-known spaces, such as the Hardy, Bergman, Bloch spaces or the disc algebra, the linear isometries have been completely described \cite{Chal-Part, Cima-Wogen, Colonna, El-G-W, Forelli, Kolaski, Schwartz, Zorboska}. \medskip

This paper can be considered as a sequel to \cite{COP}, where the authors characterised completely the linear isometries $V$ on the space $\Hol(\D)$ of holomorphic functions on the unit disc, relative to one of the two classical distances. We showed there that $V$ is of the form
\[ V(f)(z) = \alpha f(\beta z) =: T_{\alpha, \beta}(f)(z),
    \quad \alpha, \beta \in \T, \]
where $\T$ is the unit circle of $\C$. Those $V$ are determined by being isometric for any two seminorms of $\Hol(\D)$, defined by $\norm{f}_{\infty, r} := \sup_{z \in K_r} \abs{f(z)}$, with $r \in [0, 1)$ and $K_r = r\overline \D$. \smallskip

It is natural to consider the same problem when replacing the unit disc $\D$ with other domains, such as the half-plane $\H$ or the whole plane $\C$. Those two cases are easily obtained from the results of \cite{COP}.

\begin{itemize}[label=$\star$]
    \item For the half-plane $\H$, let us define the \emph{Cayley transform} $\tau : \D \to \H$ by
    \[ \tau(z) = \frac{1+z}{1-z}. \]
    The map $\tau$ is  biholomorphic. For all $r \in [0, 1)$, denote $L_r = \tau(K_r)$. We aim to find a linear isometry $V$ of $\Hol(\H)$ for the seminorms associated with the sets $L_r$. To do so, note that for all $F \in \Hol(\H)$, the map $f = F \circ \tau$ is in $\Hol(\D)$. Hence, for every linear isometry $V$ of $\Hol(\H)$,
    \begin{align*}
        \sup_{w \in K_r} \abs{f(z)}
        & = \sup_{z \in L_r} \abs{F(z)} \\
        & = \sup_{z \in L_r} \abs{(VF)(z)}
        = \sup_{w \in K_r} \abs{(C_\tau \circ V \circ C_{\tau^{-1}})(f)(w)}.
    \end{align*} 
    Therefore, $C_\tau \circ V \circ C_{\tau^{-1}}$ is a linear isometry of $\Hol(\D)$, so we obtain that $V$ is of the form $C_{\tau^{-1}} \circ T_{\alpha, \beta} \circ C_\tau$ for some $\alpha, \beta \in \T$. \medskip

    \item For the plane $\C$, assume that $\norm{Tf}_{\infty, \rho} = \norm{f}_{\infty, \rho}$ for all $f \in \Hol(\C)$ and $\rho \ge 0$. 
    Then, it is in particular true for all $f$ polynomial. By density, for all $f \in \Hol(\D)$ and $0 \le \rho < 1$, $\norm{Tf}_{\infty, \rho} = \norm{f}_{\infty, \rho}$. Thus, $T$ is an isometry of $\Hol(\D)$, and $T = T_{\alpha, \beta}$ for some $\alpha, \beta \in \T$.
\end{itemize}

We now consider a doubly-connected domain, that is an annulus.
We fix $R > 1$ and define $\A = \{z \in \C : 1/R < \abs{z} < R\}$. Our main goal is to obtain a similar characterisation as in \cite{COP}. \medskip
    
In Sections \ref{Sec - First results} and \ref{Sec - Characterisations}, we give preliminary results about distances, seminorms, composition operators and powers of $z$. These will be present in the proof of the Main Theorem (Section \ref{Sec - Main result}), which says that if we consider annuli symmetric to the unit circle, then the only linear isometries of $\Hol(\A)$ are rotations or inversions. \medskip

In \cite{COP}, we also used a description of the Blaschke products to obtain a characterisation of linear isometries for one seminorm associated to one of the $K_r$. In Section \ref{Sec - Unimodular maps unit circle}, we describe the annulus version of Blaschke products, that is, unimodular functions on the unit circle. Results of Fatou \cite{Fatou23} and Bourgain \cite{Bourgain86} will be in the spotlight in this section. \medskip

Finally, Section \ref{Sec - Spectrum} is devoted to the spectral study of the linear isometries of $\Hol(\A)$. In particular, depending on the angle of the rotation, we give examples of rotations such that the spectrum is different from the point spectrum or the unit circle.

\section{First results}\label{Sec - First results}

This section is devoted to proving the main ingredients in the proof of the main result, that is, the description of linear isometries on the Fréchet space of analytic functions on an annulus.

The content of this section may be of independent interest. 

\subsection{Algebraic characterisation  of composition operators}

From now on, for $n \in \Z$, we write $e_n$ for the function defined by $e_n(z)=z^n$.

\begin{lem}\label{Lemme - Formes linéaires}
    Let $L : \Hol(\A) \to \C$ be non-zero, linear, continuous and multiplicative. 
    Then there exists $z_0 \in \A$ such that for all $f \in \Hol(\A)$, $L(f) = f(z_0)$.
\end{lem}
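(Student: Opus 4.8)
The plan is to identify the multiplicative linear functional with an evaluation at a point of the annulus. I would first extract the candidate point from the action of $L$ on the coordinate function $e_1(z) = z$. Set $z_0 := L(e_1)$. The central task is twofold: to show that $z_0 \in \A$, and to show that $L(f) = f(z_0)$ for all $f \in \Hol(\A)$.

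First I would pin down the value of $L$ on the Laurent monomials $e_n$. Since $L$ is multiplicative and $e_n = e_1^n$ for $n \ge 0$, we immediately get $L(e_n) = L(e_1)^n = z_0^n$. For negative powers, note that $e_1 \cdot e_{-1} = e_0 = 1$, and $L(1) = 1$ because $L$ is multiplicative and non-zero (apply $L$ to $1 = 1 \cdot 1$ to get $L(1) = L(1)^2$, so $L(1) \in \{0,1\}$, and $L(1) = 0$ would force $L \equiv 0$). Hence $L(e_1) L(e_{-1}) = 1$, which forces $z_0 \neq 0$ and $L(e_{-1}) = z_0^{-1}$, and more generally $L(e_n) = z_0^n$ for all $n \in \ZZ$. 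This already shows $z_0 \neq 0$.

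Next I would promote this from monomials to all of $\Hol(\A)$ using continuity together with the Laurent series expansion. Every $f \in \Hol(\A)$ admits a Laurent expansion $f = \sum_{n \in \ZZ} a_n e_n$ converging in the Fréchet topology of $\Hol(\A)$ (uniform convergence on compact subsets, i.e.\ on the annuli $\{ \rho \le |z| \le \rho' \}$). By continuity and linearity of $L$, we may pass the functional through the sum to obtain
\[
    L(f) = \sum_{n \in \ZZ} a_n L(e_n) = \sum_{n \in \ZZ} a_n z_0^n.
\]
The right-hand side is precisely $f(z_0)$, provided the Laurent series of $f$ converges at the point $z_0$, which in turn requires $z_0 \in \A$. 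So the whole argument hinges on the same missing fact.

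The main obstacle, then, is establishing $z_0 \in \A$, i.e.\ $1/R < |z_0| < R$. I expect this to be the delicate step, and the natural tool is continuity of $L$ phrased via the seminorms: there exist a compact set $K \subset \A$ and a constant $C > 0$ with $|L(f)| \le C \sup_{z \in K} |f(z)|$ for all $f$. Applying this to $f = e_n$ gives $|z_0|^n = |L(e_n)| \le C \sup_{z \in K} |z|^n$ for every $n \in \ZZ$. Taking $n \to +\infty$ and extracting $n$-th roots forces $|z_0| \le \max_{z \in K} |z| < R$, while taking $n \to -\infty$ forces $|z_0| \ge \min_{z \in K} |z| > 1/R$. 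This squeezes $z_0$ into $\A$, and simultaneously guarantees convergence of the Laurent series at $z_0$, closing the argument. A final check would confirm that the rearrangement used in the displayed identity is legitimate, which follows from absolute convergence once $z_0$ lies strictly inside the annulus.
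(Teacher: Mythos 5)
Your proof is correct, but it takes a genuinely different route from the paper's. The paper follows the Gelfand-style argument of \cite[Lemma 2.2]{ACC}: after noting $L(e_0)=1$ and setting $z_0=L(e_1)$, it shows $z_0\in\A$ by observing that if $z_0\notin\A$ then $e_1-z_0e_0$ is invertible in $\Hol(\A)$, so applying $L$ to $(e_1-z_0e_0)(e_1-z_0e_0)^{-1}=e_0$ yields $0=1$; it then gets $L(f)=f(z_0)$ from the divisibility fact that $f(z_0)=0$ implies $f=(e_1-z_0e_0)h$ with $h\in\Hol(\A)$. Notably, that argument never invokes continuity, so it actually establishes the stronger statement that \emph{every} non-zero multiplicative linear functional on $\Hol(\A)$ is a point evaluation. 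Your argument instead uses continuity in an essential way, twice: once to trap $|z_0|$ between $\min_{z\in K}|z|$ and $\max_{z\in K}|z|$ by applying the seminorm bound to $e_n$ and taking $n$-th roots as $n\to\pm\infty$ (this is the step the paper replaces by the invertibility trick, and your version is sound --- the two-sided squeeze using both positive and negative powers is exactly what the annulus requires), and once to pass $L$ through the Laurent expansion, which converges in the Fréchet topology. What each approach buys: the paper's is shorter, purely algebraic, and transfers verbatim to other domains without a global series expansion; yours is more concrete, makes the role of the topology transparent, and delivers $L(e_n)=z_0^n$ explicitly, which is the identity actually exploited in the algebraic characterisation of composition operators (Proposition \ref{Prop - Carac comp op}). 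Both are complete proofs.
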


\begin{proof}
    The proof uses  the same ideas as in \cite[Lemma 2.2]{ACC}. For the sake of completeness, we detail them.  
    Note that for all $f \in \Hol(\A)$, 
    \[ L(f) = L(f e_0) = L(f) L(e_0). \] 
    Choosing $f$ such that $L(f) \neq 0$, we have $L(e_0) = 1$. Denote by $z_0 = L(e_1)$. Thus,
    \begin{itemize}[label=\sbt]
        \item Assume that $z_0 \not\in \A$. 
        Then $g = (e_1 - z_0 e_0)^{-1} \in \Hol(\A)$ and 
        \[ (e_1 - z_0 e_0) g = e_0 
            \implies 0 = [L(e_1) - z_0 L(e_0)] L(g) = L(e_0) = 1, \] 
        a contradiction. Hence, $z_0 \in \A$. \medskip

        \item Let $f \in \Hol(\A)$ such that $f(z_0) = 0$. There exists $h \in \Hol(\A)$ such that $f = (e_1 - z_0 e_0) h$, so 
        \[ L(f) = [L(e_1) - z_0 L(e_0)]L(h) = [z_0 - z_0]L(h) = 0. \medskip \]

        \item If $f(z_0) \neq 0$, then the function $g = f - f(z_0) e_0 \in \Hol(\A)$ satisfies $g(z_0) = 0$. Hence, $L(g) = 0$, and $L(f) = L(f(z_0)e_0) = f(z_0)$. \qedhere
    \end{itemize}
\end{proof}

Along the same lines as in \cite{ACC}, Lemma~\ref{Lemme - Formes linéaires} combined with Runge's theorem \cite{Rudin}, provides an algebraic characterisation of composition operators, whose proof is omitted.   

\begin{prop}\label{Prop - Carac comp op}
    Let $T : \Hol(\A) \to \Hol(\A)$ be a continuous linear operator. Then $T$ is a composition operator if and only if for all $n \in \Z$, $Te_n = (Te_1)^n$, where $e_n(z) = z^n$.
\end{prop}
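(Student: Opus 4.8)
The plan is to prove the nontrivial implication, since the forward direction is immediate: if $T$ is the composition operator $Tf = f \circ \varphi$ for some holomorphic $\varphi : \A \to \A$, then for every $n \in \Z$ we have $Te_n = e_n \circ \varphi = \varphi^n = (e_1 \circ \varphi)^n = (Te_1)^n$, using that $e_n = e_1^n$ pointwise. So the content is entirely in the converse.

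For the converse, assume $Te_n = (Te_1)^n$ for all $n \in \Z$ and write $\varphi := Te_1 \in \Hol(\A)$; taking $n = 0$ gives $Te_0 = e_0 = 1$. The first step is to establish that $T$ is multiplicative. Since $e_n e_m = e_{n+m}$, the hypothesis yields $T(e_n e_m) = Te_{n+m} = \varphi^{n+m} = \varphi^n \varphi^m = Te_n \, Te_m$, so $T$ is multiplicative on monomials; expanding a product of two elements of $\Span\{e_n : n \in \Z\}$ and using linearity of $T$ together with this monomial identity, one checks that $T(pq) = Tp\,Tq$ for all Laurent polynomials $p, q$. The second step is to upgrade this to all of $\Hol(\A)$. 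By Runge's theorem the Laurent polynomials are dense in $\Hol(\A)$ for the topology of uniform convergence on compact subsets, and in this Fréchet topology multiplication is jointly continuous. Hence, approximating $f, g \in \Hol(\A)$ by Laurent polynomials and passing to the limit using the continuity of $T$, we obtain $T(fg) = Tf \, Tg$ for all $f, g \in \Hol(\A)$.

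The final step is to invoke Lemma~\ref{Lemme - Formes linéaires} pointwise. Fix $z_0 \in \A$ and define $L_{z_0} : \Hol(\A) \to \C$ by $L_{z_0}(f) = (Tf)(z_0)$. This functional is linear and continuous (as the composition of $T$ with evaluation at $z_0$, both continuous), multiplicative by the previous step, and non-zero since $L_{z_0}(e_0) = \varphi(z_0)^0 = 1$. Lemma~\ref{Lemme - Formes linéaires} then provides a point $w(z_0) \in \A$ with $(Tf)(z_0) = f(w(z_0))$ for all $f$; taking $f = e_1$ identifies $w(z_0) = \varphi(z_0)$. Thus $Tf = f \circ \varphi$ with $\varphi = Te_1 : \A \to \A$ holomorphic, so $T$ is a composition operator.

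The main obstacle is the passage from multiplicativity on Laurent polynomials to multiplicativity on all of $\Hol(\A)$: this is precisely where Runge's theorem (density of Laurent polynomials on the annulus) and the joint continuity of multiplication in the Fréchet topology are essential, and it explains the reliance on $T$ being defined on all of $\Hol(\A)$ rather than on a subalgebra. Once multiplicativity is secured, the conclusion follows cleanly from the characterisation of multiplicative functionals in Lemma~\ref{Lemme - Formes linéaires}, which automatically delivers both the holomorphic symbol $\varphi$ and the crucial fact that it takes values in $\A$.
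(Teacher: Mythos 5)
Your proof is correct and follows exactly the route the paper intends (the paper omits the proof but states it combines Lemma~\ref{Lemme - Formes linéaires} with Runge's theorem): multiplicativity on monomials from the hypothesis, extension to all of $\Hol(\A)$ by density of Laurent polynomials and joint continuity of multiplication, then application of the lemma to the evaluation functionals $f \mapsto (Tf)(z_0)$ to produce the symbol $\varphi = Te_1$ with values in $\A$. No gaps.
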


\subsection{From distances to seminorms}

The two next results are contained in \cite{COP}.

\begin{lem}[{\cite[Lemma 1.1]{COP}}]\label{Lemme - Isométrie Fréchet}
    Let $X$ be a Fréchet space, and $(\norm{\cdot}_k)_{k \ge 1}$ an associated increasing family of semi-norms.
    A linear operator $T : X \to X$ is isometric, considering the distance $d_X$ defined by
    \[ d_X(x, y) = \sum_{k = 1}^{\infty} 2^{-k} \min(1, \norm{x-y}_{k}), \]
    if and only if for all $x \in X$ and $k \in \N$, $\norm{Tx}_k = \norm{x}_k$.
\end{lem}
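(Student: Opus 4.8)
The plan is to prove the easy implication directly and then to extract the individual seminorm equalities from the isometry condition by a homogeneity-plus-monotonicity argument. First I would dispose of the ``if'' direction: if $\norm{Tx}_k = \norm{x}_k$ for every $x$ and $k$, then linearity gives $\norm{T(x-y)}_k = \norm{x-y}_k$, and substituting into the series defining $d_X$ yields $d_X(Tx,Ty) = d_X(x,y)$ term by term.

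For the converse, assume $T$ is a $d_X$-isometry. By linearity $d_X(Tx,Ty) = d_X(T(x-y),0)$ and $d_X(x,y) = d_X(x-y,0)$, so writing $\phi(u) := d_X(u,0) = \sum_{k\ge 1} 2^{-k}\min(1,\norm{u}_k)$ and letting $u=x-y$ range over all of $X$, the hypothesis becomes $\phi(Tu) = \phi(u)$ for every $u \in X$. This is a single scalar equation per $u$, so to separate the seminorms I would exploit the scalar homogeneity of the seminorms: replacing $u$ by $tu$ (with $t>0$) and using $T(tu)=tTu$ gives the equality of the two one-parameter functions
\[ h_u(t) := \sum_{k\ge 1} 2^{-k}\min(1,t\norm{u}_k) = \sum_{k\ge 1} 2^{-k}\min(1,t\norm{Tu}_k) =: h_{Tu}(t), \qquad t>0. \]

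The heart of the argument is to recover each $\norm{u}_k$ from the function $h_u$. I would observe that $h_u$ is continuous, nondecreasing, concave and piecewise linear, with corners located exactly at the points $t=1/\norm{u}_k$ (for those $k$ with $\norm{u}_k>0$), that the downward jump of the slope at a corner $t=1/c$ equals $c\sum_{k:\,\norm{u}_k=c}2^{-k}$, and that $\lim_{t\to\infty} h_u(t) = \sum_{k:\,\norm{u}_k>0} 2^{-k}$ records the total weight of the nonzero seminorms. Since $h_u=h_{Tu}$, the two functions have the same corners and the same slope jumps, whence for every value $c\ge 0$,
\[ \sum_{k:\,\norm{u}_k=c} 2^{-k} = \sum_{k:\,\norm{Tu}_k=c} 2^{-k}. \]
The final step uses that the family $(\norm{\cdot}_k)_k$ is increasing: for each fixed value $c$ the index set $\{k:\norm{u}_k=c\}$ is a contiguous block of integers, and likewise for $Tu$. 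Since the dyadic sum over a block $\{a,\dots,b\}$ equals $2^{-(a-1)}-2^{-b}\in[2^{-a},2^{-(a-1)})$, the value of the sum pins down $a$, and then $b$, uniquely; hence the two blocks coincide for every $c$ (the case $c=0$ being handled by the limit at infinity and the fact that the zero-set is an initial block $\{1,\dots,m\}$). This gives $\norm{u}_k=\norm{Tu}_k$ for all $k$ and all $u$.

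I expect the main obstacle to be precisely this disentangling of the seminorms from the single aggregated distance: a naive attempt to read off $\norm{u}_k$ from $\phi$ fails because several seminorms may coincide and because dyadic sums are \emph{not} injective over arbitrary index sets (for instance $\sum_{k\ge 2}2^{-k}=2^{-1}$). The monotonicity of the family is exactly what forces the relevant index sets to be intervals, restoring injectivity of the dyadic sums and closing the argument; the routine verifications of concavity, the corner locations, and the limit at infinity I would relegate to a short computation.
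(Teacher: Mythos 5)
The paper does not actually prove this lemma: it is quoted verbatim from \cite[Lemma 1.1]{COP}, so there is no in-house argument to compare yours against; I can only assess your proof on its own terms. Your strategy is sound: the reduction to $\phi(Tu)=\phi(u)$ with $\phi(u)=\sum_k 2^{-k}\min(1,\norm{u}_k)$, the scaling $u\mapsto tu$, and the recovery of the seminorms from the concave function $h_u(t)=\sum_k 2^{-k}\min(1,t\norm{u}_k)$ all work. In particular the slope jump $c\sum_{k:\,\norm{u}_k=c}2^{-k}$ at $t=1/c$ is correctly computed (term-by-term one-sided differentiation is justified by monotone convergence of the difference quotients of the concave summands), although ``piecewise linear'' is not literally accurate when the corners $1/\norm{u}_k$ accumulate; only the existence of one-sided derivatives is needed, so this is cosmetic.

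The one step that is literally false is the claim that the dyadic sum over a block determines the block. Your interval $[2^{-a},2^{-(a-1)})$ only covers \emph{finite} blocks $\{a,\dots,b\}$; but a block $\{k:\norm{u}_k=c\}$ may be infinite, since nothing in the hypotheses prevents the seminorm sequence of a fixed $u$ from being eventually constant (for a constant function on $\A$ it is constant from the start). An infinite block $\{a,a+1,\dots\}$ has sum $2^{-(a-1)}$, which coincides with the sum over the singleton $\{a-1\}$, so an individual weight does not pin down $a$. The repair is immediate and stays inside your framework: compare \emph{cumulative} weights. By monotonicity $\{k:\norm{u}_k\le c\}$ is an initial segment $\{1,\dots,N_u(c)\}$, and summing your identity over all values $c'\le c$ gives $1-2^{-N_u(c)}=1-2^{-N_{Tu}(c)}$, hence $N_u(c)=N_{Tu}(c)$ for every $c\ge 0$, hence $\norm{u}_k=\norm{Tu}_k$ for all $k$. (Equivalently, work with $\Psi_u(s)=s\,h_u(1/s)=\sum_k 2^{-k}\min(s,\norm{u}_k)$, whose right derivative at $s$ equals $\sum_{k:\norm{u}_k>s}2^{-k}=2^{-N_u(s)}$; this reads off the counting function directly and avoids block sums altogether.) With that adjustment your proof is complete.
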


\begin{lem}[{\cite[Lemma 1.2]{COP}}]
    Let $X$ be a Fréchet space, and $(\norm{\cdot}_k)_{k \ge 1}$ the associated increasing family of semi-norms.
    A linear operator $T : X \to X$ is isometric, considering the distance $d'_X$ defined by
    \[ d'_X(x, y) = \sum_{k = 1}^{\infty} 2^{-k} 
        \frac{\norm{x-y}_k}{1+\norm{x-y}_k}, \]
    if and only if for all $x \in X$ and $k \in \N$, $\norm{Tx}_k = \norm{x}_k$.
\end{lem}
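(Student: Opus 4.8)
The statement is an equivalence, and the forward direction is immediate: if $\norm{Tx}_k = \norm{x}_k$ for every $x$ and every $k$, then linearity of $T$ gives $\norm{Tx-Ty}_k = \norm{T(x-y)}_k = \norm{x-y}_k$, so each summand defining $d'_X(Tx,Ty)$ coincides with the corresponding summand of $d'_X(x,y)$, whence $d'_X(Tx,Ty)=d'_X(x,y)$. All the content lies in the converse, which I plan to prove by recovering each seminorm from the single scalar identity produced by isometry.

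First I would reduce to a one-parameter family of scalar identities. Writing $\phi(t)=t/(1+t)$, isometry together with $T0=0$ gives $d'_X(Tx,0)=d'_X(x,0)$, that is $\sum_{k\ge1}2^{-k}\phi(\norm{Tx}_k)=\sum_{k\ge1}2^{-k}\phi(\norm{x}_k)$. Replacing $x$ by $\lambda x$ with $\lambda>0$ and using homogeneity of the seminorms together with linearity of $T$, I obtain, for a fixed $x$ and with $a_k:=\norm{Tx}_k$ and $b_k:=\norm{x}_k$ (both nondecreasing in $k$ since the family is increasing),
\[ \sum_{k\ge1}2^{-k}\phi(\lambda a_k)=\sum_{k\ge1}2^{-k}\phi(\lambda b_k),\qquad \lambda>0. \]
The goal then becomes: this identity, holding for all $\lambda>0$, forces $a_k=b_k$ for every $k$.

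The decisive manipulation is to write $\phi(t)=1-1/(1+t)$; since $\sum_{k\ge1}2^{-k}=1$, the identity becomes $\sum_{k\ge1}2^{-k}/(1+\lambda a_k)=\sum_{k\ge1}2^{-k}/(1+\lambda b_k)$ for all $\lambda>0$. Up to an elementary change of variable, each side is the Stieltjes transform of the discrete probability measure $\nu_a=\sum_{k\ge1}2^{-k}\delta_{a_k}$, respectively $\nu_b$, evaluated off its support; by injectivity of the Stieltjes transform I conclude $\nu_a=\nu_b$. Equivalently, for every value $v$ one has $\sum_{k:a_k=v}2^{-k}=\sum_{k:b_k=v}2^{-k}$.

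It remains to pass from equality of these value-distributions to equality of the sequences, and here the monotonicity of the seminorm family is essential. Because $(a_k)$ is nondecreasing, the indices realising a fixed value form a block of consecutive integers, so every mass of $\nu_a$ is a partial sum $\sum_{k=p}^{q}2^{-k}$; as these dyadic sums are pairwise distinct, a greedy reconstruction starting from the smallest atom $a_1$ recovers the length of the first block, then the next atom, and so on, determining $(a_k)$ uniquely from $\nu_a$. Applying the same reconstruction to $\nu_b=\nu_a$ yields $a_k=b_k$ for all $k$. I expect this last recovery step to be the main obstacle: one cannot simply differentiate the scalar identity at $\lambda=0$ to read off $\sum_k2^{-k}a_k$, since the increasing family need not satisfy $\sum_k2^{-k}\norm{x}_k<\infty$. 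Routing through the Stieltjes transform (equivalently, through the $\lambda\to\infty$ expansion of $\sum_k2^{-k}/(1+\lambda a_k)$, whose convergent coefficients are the negative moments $\sum_k2^{-k}a_k^{-m}$) circumvents this convergence issue, while the dyadic weights and monotonicity are precisely what make the final reconstruction injective.
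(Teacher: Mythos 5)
Your argument is correct, but note that the paper you are comparing against does not actually prove this lemma: it is quoted verbatim from \cite[Lemma 1.2]{COP} and used as a black box, so the only meaningful comparison is with the argument given there. Your proof is self-contained and sound. The forward direction is indeed immediate from linearity. For the converse, the reduction to the scalar identity $\sum_k 2^{-k}\phi(\lambda a_k)=\sum_k 2^{-k}\phi(\lambda b_k)$ for all $\lambda>0$, the rewriting $\phi(t)=1-1/(1+t)$ using $\sum_k 2^{-k}=1$, and the identification of both sides with Stieltjes transforms of the probability measures $\nu_a=\sum_k 2^{-k}\delta_{a_k}$ and $\nu_b$ are all legitimate (the substitution $\lambda=1/s$ gives $s\,S_{\nu_a}(s)=s\,S_{\nu_b}(s)$ on $(0,\infty)$, and injectivity of the Stieltjes transform for finite positive measures on $[0,\infty)$, atoms at $0$ and unbounded support included, is classical). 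The final reconstruction step is also right, and you correctly identify monotonicity of $(a_k)$ as the essential ingredient: the level sets are blocks of consecutive indices, the block masses $2^{1-p}-2^{-q}$ determine $q$ from $p$, and a greedy recovery starting from $a_1=\min\operatorname{supp}\nu_a$ yields the whole sequence. You are also right to flag that one cannot differentiate at $\lambda=0$ because $\sum_k 2^{-k}\norm{x}_k$ may diverge; working at $\lambda\to\infty$ (negative moments, which do converge since $a_k\ge a_{k_0}>0$ on the tail where $a_k>0$) or via Stieltjes inversion avoids this. The only cost of your route is that it invokes Stieltjes--Perron inversion (equivalently, determinacy of a compactly supported moment problem), which is heavier machinery than one might expect for a lemma of this kind, but there is no gap.
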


In particular, we consider here the seminorms
\[ \norm{f}_{\infty, n} := \sup_{z \in K_n} \abs{f(z)}, \quad
    K_n = \{ z \in \C : R^{\frac 1 n - 1} \le \abs{z} \le R^{1- \frac 1 n} \}. \]
Clearly, $K_1$ is the unit circle $\T$. In the following, denote $R_n = R^{1-\frac 1 n}$.


\section{Three useful characterisations}\label{Sec - Characterisations}

We first recall the famous  Hadamard three-circle theorem. Its detailed proof can be found for example in \cite{Milner}. However, this paper deals with regular functions on an annulus rather than a half-line. Since we work with holomorphic maps on the entire annulus, we can be more precise in the case of  equality.

\begin{theo}\label{Thm - Hadamard} ~

    Let $f \in \Hol(\A)$, and for $r \in (1/R, R)$, let $M(r) = \sup\{\abs{f(z)} : \abs{z} = r\}$. Then, the map $r \mapsto \log(M(r))$ is convex, i.e. for all $1/R < r_1 < r_2 < r_3 < R$,
    \begin{equation}\label{Eq - Hadamard}
        \log\left(\frac{r_3}{r_1}\right) \log(M(r_2))
        \le \log\left(\frac{r_3}{r_2}\right) \log(M(r_1))
            + \log\left(\frac{r_2}{r_1}\right) \log(M(r_3)).
    \end{equation} 

    Moreover, we have equality if and only if $f(z) = c z^n$ for some $c \in \C$ and $n \in \Z$.
\end{theo}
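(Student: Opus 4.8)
The plan is to compare $u:=\log\abs{f}$ with the unique rotationally symmetric harmonic function interpolating the data on the two bounding circles, apply the maximum principle for the inequality, and then extract the equality case from the strong maximum principle together with the non-trivial topology of the annulus. I would first assume $f\not\equiv0$ (otherwise $M\equiv0$ and the statement is vacuous), and recall that $u=\log\abs f$ is subharmonic on $\A$, being harmonic away from the zeros of $f$ and equal to $-\infty$ there. Fixing $1/R<r_1<r_2<r_3<R$, I let $A,B\in\R$ be determined by $A\log r_1+B=\log M(r_1)$ and $A\log r_3+B=\log M(r_3)$, and set $h(z):=A\log\abs z+B$, which is harmonic on $\C^\ast$, hence on the closed sub-annulus $\overline D:=\{r_1\le\abs z\le r_3\}$. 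On each bounding circle $u\le h$ by the very definition of $M(r_1)$ and $M(r_3)$, so $u-h$ is subharmonic on $D$ and $\le0$ on $\partial D$; the maximum principle yields $u\le h$ on $\overline D$.

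Evaluating at radius $r_2$, I get $\log\abs{f(z)}\le A\log r_2+B$ for every $z$ with $\abs z=r_2$, whence $\log M(r_2)\le A\log r_2+B$. A direct computation gives the identity $\log r_2=\frac{\log(r_3/r_2)}{\log(r_3/r_1)}\log r_1+\frac{\log(r_2/r_1)}{\log(r_3/r_1)}\log r_3$, so that $A\log r_2+B$ is exactly the convex combination $\frac{\log(r_3/r_2)}{\log(r_3/r_1)}\log M(r_1)+\frac{\log(r_2/r_1)}{\log(r_3/r_1)}\log M(r_3)$. Multiplying through by $\log(r_3/r_1)$ produces precisely \eqref{Eq - Hadamard}, and shows that equality there is equivalent to $\log M(r_2)=A\log r_2+B$.

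For the equality case, suppose $\log M(r_2)=A\log r_2+B$. Then $u-h$, which is $\le0$ on $\overline D$, attains the value $0$ at some point $z_0$ with $\abs{z_0}=r_2$, an interior point of $D$. By the strong maximum principle for subharmonic functions on the connected open set $D$, this forces $u-h\equiv0$ on $D$, that is $\abs{f(z)}=e^B\abs z^A$ throughout $D$; in particular $f$ has no zeros in $D$.

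The rigidity step is where the argument really uses that we work on the whole annulus, and I expect it to be the main obstacle. Since $f$ is zero-free on $D$, let $n:=\frac{1}{2\pi i}\oint_{\abs z=r}\frac{f'(z)}{f(z)}\,\dd z\in\Z$, the index of $f$ along the circles $\abs z=r$ (independent of $r$). Then $z^{-n}f$ has index $0$ and hence admits a holomorphic logarithm on $D$, so $f(z)=z^n e^{g(z)}$ with $g\in\Hol(D)$. Taking moduli gives $\Re g(z)=B+(A-n)\log\abs z$. But a harmonic function of the form $(A-n)\log\abs z$ admits a single-valued harmonic conjugate on the annulus $D$ only if its period $2\pi(A-n)$ around the hole vanishes; since $g$ is single-valued and holomorphic, this forces $A=n\in\Z$, whence $\Re g\equiv B$ is constant and $g$ is constant. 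Therefore $f(z)=cz^n$ on $D$ with $c=e^{g}$, and by the identity theorem $f(z)=cz^n$ on all of $\A$. The converse is immediate: if $f=cz^n$ then $\log M(r)=\log\abs c+n\log r$ is affine in $\log r$, so \eqref{Eq - Hadamard} holds with equality. Compared with the classical half-line formulation, the only genuinely new ingredient is this period obstruction, which both pins $A$ down to an integer and yields the exact form $cz^n$.
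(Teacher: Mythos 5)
Your proof is correct; note that the paper does not actually supply one, contenting itself with a citation to Milner for the convexity inequality and a bare assertion that the equality case can be sharpened for functions holomorphic on the whole annulus. Your argument therefore fills a real gap. The first half --- comparing the subharmonic function $u=\log\abs{f}$ with the harmonic interpolant $h(z)=A\log\abs{z}+B$ and applying the maximum principle on $\{r_1\le\abs{z}\le r_3\}$ --- is the standard potential-theoretic proof of \eqref{Eq - Hadamard}, and your affine-interpolation identity for $\log r_2$ correctly converts $\log M(r_2)\le A\log r_2+B$ into the stated inequality. The equality case is the part the paper leaves unproved, and your treatment is sound: the supremum $M(r_2)$ is attained by compactness, so $u-h$ reaches its maximum $0$ at an interior point, and the strong maximum principle gives $\abs{f(z)}=e^{B}\abs{z}^{A}$ on the open sub-annulus, hence $f$ is zero-free there; writing $f=z^{n}e^{g}$ via the winding number is exactly the device of the paper's Lemma \ref{lem:key1}, and the period obstruction ($\Re g$ must admit a single-valued harmonic conjugate, forcing $2\pi(A-n)=0$) is precisely the point at which the doubly connected topology enters and pins $A$ down to an integer, after which $g$ is constant and the identity theorem propagates $f=cz^{n}$ to all of $\A$. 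The converse direction and the exclusion of $f\equiv 0$ are handled appropriately. In short, your proof is a legitimate and self-contained justification of the statement, including the refinement of the equality case that the paper invokes in Corollary \ref{cor - Carac rot} but never establishes.
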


We can use this result to show that the analytic functions on an annulus $\mathbb A$ preserving three circles of $\mathbb A$ are the obvious ones.

\begin{coro}\label{cor - Carac rot}
    Let $f \in \Hol(\A)$, and $1/R < r_1 < r_2 < r_3 < R$. Assume that $f(r_j \T) \subset r_j \T$ for all $j = 1, 2, 3$.
    Then, there exists $c \in \T$ such that $f(z) = cz$.
\end{coro}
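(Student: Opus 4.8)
The plan is to read off the value of $M(r_j)$ from the circle-preservation hypothesis, feed it into the Hadamard inequality \eqref{Eq - Hadamard}, and show that equality is forced; the rigidity clause of Theorem~\ref{Thm - Hadamard} then pins down the shape of $f$, and a modulus count finishes the job.

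First I would observe that $f(r_j\T)\subset r_j\T$ means precisely that $\abs{f(z)}=r_j$ for \emph{every} $z$ with $\abs z=r_j$; in particular $f$ is not identically zero and $M(r_j)=r_j$ for $j=1,2,3$. Writing $t=\log r$, the three data points $(\log r_j,\log M(r_j))=(\log r_j,\log r_j)$ lie on the diagonal, hence are collinear. Consequently the convexity inequality \eqref{Eq - Hadamard} must hold with equality: substituting $\log M(r_j)=\log r_j$ and expanding, both the left- and right-hand sides reduce to $\log(r_2)\log(r_3/r_1)$, as a one-line computation shows.

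Having equality in \eqref{Eq - Hadamard}, the rigidity statement in Theorem~\ref{Thm - Hadamard} gives $f(z)=cz^n$ for some $c\in\C$ and $n\in\Z$, and it remains only to determine $c$ and $n$. On the circle $\abs z=r_j$ we have $\abs{f(z)}=\abs c\,r_j^{\,n}$, which must equal $r_j$; thus $\abs c\,r_j^{\,n}=r_j$ for each $j$. Taking the ratio of the equations for $j=1$ and $j=2$ yields $(r_1/r_2)^{\,n-1}=1$, and since $r_1/r_2$ is a positive real different from $1$ this forces $n=1$. Plugging back gives $\abs c=1$, that is $c\in\T$, so $f(z)=cz$ as claimed.

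The only genuine step is the first one: recognising that preserving the moduli on three circles is exactly the borderline case of the three-circle theorem. Once equality in \eqref{Eq - Hadamard} is established, everything is driven by the equality clause of Theorem~\ref{Thm - Hadamard}, and the identification of $c$ and $n$ is short modulus bookkeeping that uses only $r_1\neq r_2$. I expect no serious obstacle; the subtlety worth checking is merely that the hypothesis gives $\abs{f(z)}=r_j$ at \emph{all} points of the circle, not just for the supremum, which is what makes $M(r_j)=r_j$ an exact equality rather than an inequality.
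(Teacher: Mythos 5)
Your proposal is correct and follows essentially the same route as the paper: both identify $M(r_j)=r_j$, observe that equality in \eqref{Eq - Hadamard} is forced (the paper phrases this contrapositively, deriving the absurdity $0<0$ from strict inequality), invoke the rigidity clause of Theorem~\ref{Thm - Hadamard} to get $f(z)=cz^n$, and then solve $\abs{c}\,r_j^{\,n}=r_j$ to conclude $n=1$ and $\abs{c}=1$.
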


\begin{proof}
    Taking the notations of Theorem \ref{Thm - Hadamard}, for all $j = 1, 2, 3$, $M(r_j) = r_j$. If the map $f$ is not of the form $c z^n$, with $c \in \C$ and $n \in \Z$, then
    \begin{align*}
        & \log\left(\frac{r_3}{r_1}\right) \log(r_2)
        < \log\left(\frac{r_3}{r_2}\right) \log(r_1)
            + \log\left(\frac{r_2}{r_1}\right) \log(r_3) \\
        \iff & \log(r_3)\log(r_2) - \log(r_1)\log(r_2)
        < \log(r_2)\log(r_3) - \log(r_2)\log(r_1) \\
        \iff & 0 < 0.
    \end{align*}
    It follows that $f(z) = c z^n$ for some $c \in \C$ and $n \in \Z$. Then, $f(r_j\T) \subset \abs{c} r_j^n \T$. Thus,
    \[ r_j = \abs{c} r_j^n, \quad j = 1, 2, 3, \]
    which means that $\abs{c} = 1$ and $n = 1$.
\end{proof}

We now present and prove the last tool we use in the proof of our main result. The ideas are very similar to the ones involved in \cite[Theorem 1.4]{COP}.
 
\begin{theo}\label{Lemme - Carac Blaschke finis}
    Let $f \in \Hol(\A)$ such that for some $r \in (1/R, R)$ and $\rho > 0$,
    \[ \sup_{\abs{z} = r} \abs{f(z)} = \rho. \]
    Then the set $\Xi = \{\xi \in r\T : \abs{f(\xi)} = \rho\} = f^{-1}(\rho\T) \cap r\T$ is either equal to $r\T$, or a finite set.
\end{theo}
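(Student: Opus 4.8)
The plan is to show that $\abs{f}^2$, restricted to the circle $r\T$, is the boundary trace of a single function holomorphic in an annular \emph{neighbourhood} of $r\T$. Once this is achieved, the dichotomy is immediate: the zeros of a non-trivial holomorphic function are isolated, while an identically vanishing function forces the whole circle. The mechanism that turns the pointwise modulus into a holomorphic object is Schwarz reflection, mirroring the Blaschke-product argument of \cite[Theorem 1.4]{COP}.

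First I would introduce the reflection $f^*(z) := \overline{f(\overline z)}$. Since $\A$ is invariant under complex conjugation, $f^* \in \Hol(\A)$. The key identity is that for $z \in r\T$ one has $\overline z = r^2/z$, whence $f^*(r^2/z) = \overline{f(r^2/\overline z)} = \overline{f(z)}$, and therefore
\[ \abs{f(z)}^2 = f(z)\, f^*(r^2/z), \qquad z \in r\T. \]

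Next I would set $g(z) := f(z)\, f^*(r^2/z)$ and verify that it is genuinely holomorphic on an open neighbourhood of $r\T$. The factor $f(z)$ lies in $\Hol(\A)$, while $f^*(r^2/z)$ is holomorphic exactly where $r^2/z \in \A$, that is on the annulus $\{r^2/R < \abs{z} < r^2 R\}$. Intersecting the two annuli gives an open annulus $U$ that contains $r\T$, since $r^2/R < r < r^2 R$ follows precisely from $1/R < r < R$. Thus $g \in \Hol(U)$ and $g = \abs{f}^2$ on $r\T$; in particular $g$ attains the real value $\rho^2$ exactly on $\Xi$.

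Finally, put $h := g - \rho^2 \in \Hol(U)$. If $h$ vanishes on all of $r\T$, then because $r\T$ has accumulation points inside the connected set $U$, the identity theorem forces $h \equiv 0$, so $\abs{f} \equiv \rho$ on $r\T$ and $\Xi = r\T$. Otherwise $h \not\equiv 0$, its zero set is discrete in $U$, and $\Xi = \{z \in r\T : h(z) = 0\}$ is a discrete subset of the compact circle $r\T$, hence finite. The one point demanding care — and the step I expect to be the main obstacle — is the bookkeeping showing $g$ is holomorphic on a two-dimensional neighbourhood of $r\T$, not merely real-analytic along the circle; this upgrade is exactly what the reflection formula $\overline z = r^2/z$ provides, and it is what licenses the clean identity-theorem dichotomy rather than a weaker "finitely many maxima" argument.
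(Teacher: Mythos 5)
Your argument is correct, and it is genuinely different from the one in the paper. You globalise: the reflection $f^*(z)=\overline{f(\overline z)}$ together with the identity $\overline z = r^2/z$ on $r\T$ realises $\abs{f}^2\big|_{r\T}$ as the trace of the single holomorphic function $g(z)=f(z)f^*(r^2/z)$ on the connected annulus $U=\{\max(1/R,r^2/R)<\abs{z}<\min(R,r^2R)\}\supset r\T$ (your inequality check $r^2/R<r<r^2R\iff 1/R<r<R$ is exactly right), after which the dichotomy is the identity theorem applied to $g-\rho^2$: either it vanishes identically, giving $\Xi=r\T$, or its zeros are isolated in $U$ and $\Xi$, being a relatively closed discrete subset of the compact circle, is finite. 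The paper instead argues locally and by contradiction: assuming $\Xi$ infinite but proper, it selects a point $z_0\in\Xi$ that is simultaneously an accumulation point of $\Xi$ and an endpoint of a complementary arc, transports source and target circles to the real line by Möbius maps, and derives a contradiction from the fact that a function holomorphic near $0$ and real on a real sequence accumulating at $0$ must be real on the reals near $0$. Your version buys a cleaner statement of the dichotomy (no case analysis on the structure of $r\T\setminus\Xi$, whose topology the paper's selection of $z_0$ has to negotiate), and as a bonus exhibits $\Xi$ as the trace on $r\T$ of a holomorphic zero set; the paper's version is more elementary in that it needs no reflection bookkeeping and adapts directly from the disc argument of \cite[Theorem 1.4]{COP}. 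The one step you flagged as delicate --- that $g$ is holomorphic on a two-dimensional neighbourhood of $r\T$ rather than merely real-analytic along it --- is indeed the crux, and your verification of it is complete.
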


\begin{proof}
    Assume that $\Xi \neq r\T$. Since $f$ is continuous on $\A$ and $f^{-1}(\rho\T) \neq \varnothing$, we know that $f^{-1}(\rho\T)$ is a closed subset of $\A$. Hence, $\Xi$ is a closed subset of $r\T$. \medskip

    Assume that $\Xi$ is not finite.
     Then, the set $r\T \backslash \Xi$ is an open subset of $r\T$, which is a union of an infinite number of open circular arcs of $r\T$, and 
    one of the endpoints is a limit point of $\Xi$, denoted in the following as $z_0$. \medskip

    This point satisfies $\abs{f(z_0)} = \rho$, and there exist two sequences $(u_n) \subset \Xi$ and $(v_n) \subset r\T \backslash \Xi$ (it is sufficient to take points from the chosen arc) which tend to $z_0$, such that $\abs{f(u_n)} = \rho$ and $\abs{f(v_n)} \neq \rho$. \medskip
    
    We set $\phi_1 : r\T \to \R \cup \{\infty\}$ and $\phi_2 : \rho\T \to \R \cup \{\infty\}$ two conformal maps defined by
    \[ \phi_1(z) = i \frac{z-z_0}{z+z_0}, \qquad
        \phi_2(z) = i \frac{z-f(z_0)}{z+f(z_0)}. \]
    They satisfy $\phi_1(z_0) = 0$ and $\phi_2(f(z_0)) = 0$. Denote $h = \phi_2 \circ f \circ \phi_1^{-1}$.
    
    Since $\phi_1^{-1}$ is holomorphic near $0$, as is $\phi_2$ near $f(z_0)$, the function $h$ is holomorphic around $0$, satisfies $h(0) = 0$ and there exist two sequences $(a_n), (b_n) \subset \R$ (corresponding to $(u_n)$ and $(v_n) \subset r\T$) such that $a_n \to 0$, $b_n \to 0$, $h(a_n) \in \R$ and $h(b_n) \not\in \R$. However,
    all the derivatives of $h$ at $0$ must be real, so $h(b_n) \in \R$ (since $b_n \in \R$). Impossible. \medskip

    Finally, we must have $r\T \backslash \Xi = \varnothing$, i.e., $\Xi = r\T$, that is $f(r\T) \subset \rho\T$.
\end{proof}


\section{Main result}\label{Sec - Main result}

\begin{theo}\label{Thm - Résultat principal}
    Let $T : \Hol(\A) \to \Hol(\A)$ a linear and continuous operator such that for all $f \in \Hol(\A)$ and $n \in \N$,
    \[ \norm{T(f)}_{\infty, n} = \norm{f}_{\infty, n}
        = \sup_{z \in K_n} \abs{f(z)}. \]
    Then there exist two constants $\alpha, \beta \in \T$ such that $T = T_{\alpha, \beta}$ or $S_{\alpha, \beta}$, defined by
 \begin{equation}\label{eq:tabsab}
  T_{\alpha, \beta}(f)(z) = \alpha f(\beta z), \quad
        S_{\alpha, \beta}(f)(z) = \alpha f(\beta/z). 
        \end{equation}
\end{theo}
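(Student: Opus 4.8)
The plan is to reduce to a composition operator via Proposition~\ref{Prop - Carac comp op} and then read off the symbol. First I would normalise on constants: since $\norm{Te_0}_{\infty,n}=\norm{e_0}_{\infty,n}=1$ for all $n$ and the compacts $K_n$ exhaust $\A$, the function $Te_0$ is holomorphic, bounded by $1$ on $\A$, and satisfies $\sup_\T\abs{Te_0}=1$ (the case $n=1$, where $K_1=\T$). As $\T$ lies in the \emph{interior} of $\A$, the maximum modulus principle forces $Te_0$ to be a unimodular constant $\alpha$. Replacing $T$ by $\bar\alpha T$ (still an isometry for every seminorm) I may assume $Te_0=e_0$, and the goal becomes to show $Te_n=(Te_1)^n$ for all $n\in\Z$, whence Proposition~\ref{Prop - Carac comp op} identifies $T$ as a composition operator.

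The core step is to prove that each $g_n:=Te_n$ is a single monomial $c_n z^{p_n}$ with $\abs{c_n}=1$ and $p_n\in\{n,-n\}$. Setting $M_n(r)=\sup_{\abs z=r}\abs{g_n(z)}$, Theorem~\ref{Thm - Hadamard} makes $\psi_n(t):=\log M_n(e^t)$ convex on $(-\log R,\log R)$. The isometry gives $\psi_n(0)=\log\norm{e_n}_{\infty,1}=0$ and, for each $k$, $\max_{\abs t\le s_k}\psi_n(t)=\abs n\,s_k$ with $s_k=\log R_k$; by convexity this maximum is attained at an endpoint, so $\max(\psi_n(s_k),\psi_n(-s_k))=\abs n s_k$ with both values $\le\abs n s_k$. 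If $\psi_n(s_{k_0})=\abs n s_{k_0}$ for some $k_0$, then the monotonicity of $t\mapsto\psi_n(t)/t$ (convexity through the origin) forces $\psi_n(s_k)=\abs n s_k$ for all $k\ge k_0$; taking three such radii, $\log M_n$ is affine in $\log r$, so the equality case of Theorem~\ref{Thm - Hadamard} yields $g_n=c_nz^{p_n}$ with $p_n=+\abs n$. The alternative (inner endpoint) gives $p_n=-\abs n$, and convexity excludes genuine mixing; comparing seminorms, $\abs{c_n}R_k^{\abs{p_n}}=R_k^{\abs n}$ forces $\abs{p_n}=\abs n$ and $\abs{c_n}=1$.

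With each $Te_n=c_nz^{p_n}$ in hand, I would fix the signs and phases using test functions. The sign is global: testing $e_1+e_n$ gives $\norm{e_1+e_n}_{\infty,k}=R_k+R_k^n$, whereas the wrong choice $p_n=-n$ would yield $\norm{T(e_1+e_n)}_{\infty,k}=R_k^{-1}+R_k^n<R_k+R_k^n$ for large $k$, a contradiction; together with the analogous tests $e_1+e_{-1}$ and $e_{-1}+e_{-n}$, this shows $p_1=1$ forces $p_n=n$ for all $n$ (rotation), while $p_1=-1$ forces $p_n=-n$ (inversion). The phases are then pinned on $\T$: from $T(e_0+e_1+e_n)=e_0+c_1z^{p_1}+c_nz^{p_n}$ and the identity $\sup_\T\abs{1+z+z^n}=3$, the constraint $\norm{T(e_0+e_1+e_n)}_{\infty,1}=3$ forces the three unit vectors to align at a single point of $\T$, which gives $c_n=\beta^n$ where $\beta:=c_1$.

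In the rotation case this gives $Te_n=\beta^nz^n=(Te_1)^n$ and in the inversion case $Te_n=\beta^nz^{-n}=(Te_1)^n$; Proposition~\ref{Prop - Carac comp op} then identifies $T$ as the composition operator with symbol $z\mapsto\beta z$ or $z\mapsto\beta/z$, and undoing the normalisation returns $T=T_{\alpha,\beta}$ or $S_{\alpha,\beta}$ as in \eqref{eq:tabsab}. I expect the main obstacle to be the second paragraph: converting the ``$\sup$ over $K_n$'' isometry data into the \emph{pointwise} rigidity that $Te_n$ is one monomial. The subtlety is that the seminorm controls only the \emph{maximum} of $\abs{Te_n}$ on each circle and only says this maximum equals $R_k^{\abs n}$ somewhere in $K_k$; extracting that the extremal circle is consistently the outer (or inner) one, so that $\log M_n$ is genuinely affine over a range of radii and the equality case of Hadamard applies, is exactly where the convexity argument (or, alternatively, Theorem~\ref{Lemme - Carac Blaschke finis} applied circle by circle) must be deployed with care. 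Once monomiality is secured, the remaining bookkeeping with test functions is routine.
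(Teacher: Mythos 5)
Your argument is correct, and it follows the paper's overall skeleton (normalise $Te_0$ to a unimodular constant, show each $Te_n$ is a monomial $c_nz^{\pm n}$, fix the signs globally, fix the phases, and conclude via Proposition~\ref{Prop - Carac comp op}); the genuine difference lies in how the central monomiality step is executed. The paper first shows $(R_n)^k\T\subset f_k(K_n)$ using the test function $e_0+e^{-i\theta}e_k$, then invokes Theorem~\ref{Lemme - Carac Blaschke finis} to force $f_k$ to map one boundary circle of $K_n$ into a circle, pigeonholes over $n\in\{2,3,4\}$ to get the same boundary circle three times, and finishes with Corollary~\ref{cor - Carac rot}. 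You instead use only the convexity part of Theorem~\ref{Thm - Hadamard}: from $\psi_n(0)=0$, $\max\bigl(\psi_n(s_k),\psi_n(-s_k)\bigr)=\abs{n}s_k$ and both values $\le\abs{n}s_k$, the monotonicity of the chord slope $\psi_n(t)/t$ pins one of the two boundary branches to the line $\abs{n}\,\abs{t}$ for all large $k$, and the equality case of \eqref{Eq - Hadamard} then delivers the monomial directly. This bypasses Theorem~\ref{Lemme - Carac Blaschke finis} and the circle-image argument entirely, at no real cost. Similarly, your phase step --- testing $e_0+e_1+e_n$ on $K_1=\T$ and using the equality case of the triangle inequality to align the three unimodular terms at a single point --- collapses the paper's Steps 4 and 5 (induction over consecutive triples $e_k+e_{k+1}+e_{k+2}$ plus a separate argument for $c_{-1}$) into one stroke. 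Two places are stated tersely but complete without difficulty: the ``no genuine mixing'' claim (for each $k$ at least one endpoint attains the maximum, and whichever does so at $k=2$ propagates to all larger $k$ by the slope argument), and the sign tests for negative indices, which require the chain $e_1+e_{-1}$ followed by $e_{-1}+e_n$ exactly as you indicate.
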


\begin{proof}
    The steps of the proof go along the same lines as in \cite[Theorem 2.1]{COP}, with important technical changes since our domain is not simply connected.
    In the following, denote by $e_k(z) = z^k$ and $f_k = Te_k$, for $k \in \Z$. \medskip

    \uline{Step 1}: We begin by showing that $Te_0 = \alpha e_0$, with $\abs{\alpha} = 1$.
    Indeed, since the operator $T$ is isometric for the seminorms $\norm{\cdot}_{\infty, 2}$ and $\norm{\cdot}_{\infty, 3}$, we obtain
    \[ \norm{f_0}_{\infty, 2} = \norm{f_0}_{\infty, 3} = 1. \]
    Using the maximum modulus principle, the map $f_0$ is constant and unimodular. Denote by $\alpha$ this constant. Without loss of generality, we will assume that $\alpha = 1$. \medskip

    \uline{Step 2}: We show that $f_k$ is a rotation or an inversion, for all $k \in \Z$. Note that for $n \in \N$,
    \[ \norm{f_k}_{\infty, n} = \norm{e_k}_{\infty, n} = (R_n)^k. \]
    The maximum is attained at the boundary of $K_n$, which is $R_n\T \cup R_n^{-1}\T$. Hence,
    \[ \sup_{\abs{z} \in \{R_n, 1/R_n\}} \abs{f_k(z)} = (R_n)^k. \]
    
    Now, remark that
    \[ (R_n)^k \T \subset f_k(K_n) =: L. \]
    
    Indeed, assume that there exists $\xi = (R_n)^k e^{i\theta}$ such that $\xi \not\in L$, which is a compact set. Then $\delta = d(\xi, L) > 0$, so $L \subset (R_n)^k \overline \D \backslash D(\xi, \delta)$. 
    We set $g = e_0 + e^{-i\theta}e_k$. Hence, $Tg = e_0 + e^{-i\theta}f_k$.
    Now, compare the seminorms: we have $\norm{g}_{\infty, n} = 1+(R_n)^k$ and
    \begin{equation*}
        \norm{Tg}_{\infty, n} 
        = \sup_{z \in K_n} \abs{1 + e^{-i\theta} f_k(z)} 
        = \sup_{w \in L} \abs{1 + e^{-i\theta}w},
    \end{equation*} 
    and since $\abs{w} \le (R_n)^k$, we have the equality $\norm{Tg}_{\infty, n} = 1 + (R_n)^k$ if and only if $e^{-i\theta} w = (R_n)^k$, which is impossible because $w \in L$.
    Thus, $T$ cannot be isometric for $\norm{\cdot}_{\infty, n}$, an absurdity.
    Therefore, $(R_n)^k \T \subset L$. \medskip
    
    Assume that $\Xi_1 = \{\xi \in R_n\T : \abs{f_k(\xi)} = (R_n)^k\}$ and $\Xi_2 = \{\xi \in R_n^{-1}\T : \abs{f_k(\xi)} = (R_n)^k\}$ are both finite sets.
    Since $f_k(\mathring{K_n}) \subset (R_n)^k\D$ and $f_k(\Xi_1 \cup \Xi_2)$ is finite, we obtain a contradiction because $(R_n)^k \T \subset L$. 
    Therefore, we have two choices: either $\Xi_1 = R_n\T$, or $\Xi_2 = R_n^{-1}\T$. In other words, for all $n \in \N$,
    \[ (E_1) : f_k(R_n\T) \subset (R_n)^k\T \quad \text{ or } \quad
        (E_2) : f_k(R_n^{-1}\T) \subset (R_n)^k\T. \]

    In particular, for $n = 1$, both equations give $f_k(\T) \subset \T$. 
    Moreover, there are at least two different values of $n$ among $2, 3$ and $4$ satisfying both $(E_1)$, or both $(E_2)$.
    Without loss of generality, we assume that $n = 2$ and $n = 3$ satisfy the same inclusion.
    \begin{itemize}[label=$\star$]
        \item If $f_k(R_n\T) \subset (R_n)^k\T$ and $g_k(z) = f_k(z)/z^{k-1}$, then $g_k(R_n\T) \subset R_n\T$ for $n = 1, 2, 3$.
        Hence, by Corollary \ref{cor - Carac rot}, $g_k(z) = c_k z$, with $\abs{c_k} = 1$, so $f_k(z) = c_k z^k$. \smallskip

        \item If $f_k(R_n^{-1}\T) \subset (R_n)^k\T$ and $g_k(z) = z^{k+1} f_k(z)$, then $g_k(R_n^{-1}\T) \subset R_n^{-1}\T$ for $n = 1, 2, 3$.
        Hence, by Corollary \ref{cor - Carac rot}, $g_k(z) = c_k z$, with $\abs{c_k} = 1$, so $f_k(z) = c_k/z^k$.
    \end{itemize}

    Thus, for all $k \in \Z$, $f_k = c_k e_k$ or $c_k e_{-k}$, for some $c_k \in \T$. \medskip

    \uline{Step 3}: Assume that $f_1 = c_1 e_1$. If $f_k = c_k e_{-k}$ for some $k \in \Z$, then for all $n \in \N$,
    \[ R_n + (R_n)^k = \norm{e_1 + e_k}_{\infty, n}
        = \norm{f_1 + f_k}_{\infty, n} 
        = \norm{c_1 e_1 + c_k e_{-k}}_{\infty, n}. \]
    Note that
    \[ \norm{c_1 e_1 + c_k e_{-k}}_{\infty, n}
        = \sup_{z \in K_n} \abs{c_1 z + \frac{c_k}{z^k}}
        = \sup_{z \in K_n} \abs{z + \frac{c_k}{c_1} \frac{1}{z^k}}. \]
        
    By the maximum modulus principle, the maximum is attained at the boundary of $K_n$, which is $R_n\T \cup R_n^{-1}\T$. We consider two possibilities:
    \begin{itemize}[label=$\star$]
        \item If the maximum is attained in $R_n\T$, then
        \begin{align*}
             R_n + (R_n)^k 
            & = \sup_{\abs{z} = R_n} \abs{z + \frac{c_k}{c_1} \frac{1}{z^k}} \\
            & = \frac{1}{(R_n)^k} \sup_{\abs{z} = R_n} 
                \abs{z^{k+1} + \frac{c_k}{c_1}}
            = \frac{(R_n)^{k+1} + 1}{(R_n)^k} = R_n + \frac{1}{(R_n)^k}.
        \end{align*}

        \item If the maximum is attained in $R_n^{-1}\T$, then
        \begin{align*}
            R_n + (R_n)^k 
            & = \sup_{\abs{z} = 1/R_n} \abs{z + \frac{c_k}{c_1} \frac{1}{z^k}} \\
            & = (R_n)^k \sup_{\abs{z} = 1/R_n} \abs{z^{k+1} + \frac{c_k}{c_1}} \\
            & = (R_n)^k \left(\frac{1}{(R_n)^{k+1}} + 1\right)
            = \frac{1}{R_n} + (R_n)^k.
        \end{align*}
    \end{itemize}

    Both of the cases lead to a contradiction, since $R_n \neq 1$ for $n \ge 2$. Hence, $f_k = c_k e_k$ for some $\abs{c_k} = 1$. In the same way, if $f_1 = c_1 e_{-1}$, then $f_k = c_k e_{-k}$ for all $k \in \Z$. \medskip

    \uline{Step 4}: In the following, we assume that $f_1 = c_1 e_1$.
    Let $k \in \NN$. Note that
    \begin{align*} 
        \sup_{z \in \partial K_n} \abs{(f_k + f_{k+1} + f_{k+2})(z)}
        & = \sup_{z \in \partial K_n} \abs{(c_k e_k 
            + c_{k+1}e_{k+1} + c_{k+2}e_{k+2})(z)} \\
        & = \sup_{z \in \partial K_n} \abs{c_k z^k 
            + c_{k+1}z^{k+1} + c_{k+2} z^{k+2}} \\
        & = \sup_{z \in \partial K_n} \abs{z^k + z^{k+1} + z^{k+2}} \\
        & = (R_n)^k + (R_n)^{k+1} + (R_n)^{k+2}.
    \end{align*}
     
    Recall that $\partial K_n = R_n\T \cup R_n^{-1}\T$. Hence, for $n \ge 2$ and $k \in \N$,
    \begin{align*}
        \sup_{\abs{z} = 1/R_n} \abs{c_k z^k 
            + c_{k+1}z^{k+1} + c_{k+2} z^{k+2}}
        & \le \frac{1}{(R_n)^k} + \frac{1}{(R_n)^{k+1}} + \frac{1}{(R_n)^{k+2}} \\
        & < (R_n)^k + (R_n)^{k+1} + (R_n)^{k+2}.
    \end{align*}
    In the same way,
    \begin{align*}
        \sup_{\abs{z} = R_n} \abs{c_k z^{-k} 
            + c_{k+1}z^{-k-1} + c_{k+2} z^{-k-2}}
        & \le \frac{1}{(R_n)^k} + \frac{1}{(R_n)^{k+1}} + \frac{1}{(R_n)^{k+2}} \\
        & < (R_n)^k + (R_n)^{k+1} + (R_n)^{k+2}.
    \end{align*}
    Thus,
    \begin{align*}
        & (R_n)^k + (R_n)^{k+1} + (R_n)^{k+2} \\
        = \; &
        \begin{cases}
        \sup_{\abs{z} = R_n} \abs{c_k z^k + c_{k+1}z^{k+1} + c_{k+2} z^{k+2}},
            & k \in \N \\
        \sup_{\abs{z} = 1/R_n} \abs{c_k z^{-k} + c_{k+1}z^{-k-1} 
            + c_{k+2} z^{-k-2}},& k \in \N.
        \end{cases}
    \end{align*}
    Up to considering $w = 1/z$, we only have to consider $k$ positive.
    Dividing by $z^k$, we have
    \[ \sup_{\abs{z} = R_n} \abs{c_k + c_{k+1}z + c_{k+2}z^2}
        = 1 + R_n + (R_n)^2. \]
    Dividing by $c_k$, since $\abs{c_k} = 1$, we have
    \[ \sup_{\abs{z} = R_n} \abs{1 + \frac{c_{k+1}}{c_k} z 
            + \frac{c_{k+2}}{c_k} z^2} = 1 + R_n + (R_n)^2. \]
    Using the triangle inequality, for all $z$ satisfying $\abs{z} = R_n$,
    \[ \abs{1+\frac{c_{k+1}}{c_k}z+\frac{c_{k+2}}{c_k}z^2}
        \le 1 + R_n + (R_n)^2, \]
    with equality if and only if
    \[ 0 = \arg(1) \equiv \arg\left[\frac{c_{k+1}}{c_k}z\right] 
        \equiv \arg\left[\frac{c_{k+2}}{c_k}z^2\right] \mod 2\pi. \]
    Denoting $\theta = \arg[z]$, this gives
    \[ 0 \equiv \arg[c_{k+1}] - \arg[c_k] + \theta 
        \equiv \arg[c_{k+2}] - \arg[c_k] + 2\theta \mod 2\pi. \]
    Finally,
    \begin{align*}
        & \arg{[c_k]} - 2\arg{[c_{k+1}]} + \arg{[c_{k+2}]} \\
        = \; & (\arg{[c_{k+2}]} - \arg{[c_k]} + 2\theta)
            - 2(\arg{[c_{k+1}]} - \arg{[c_k]} + \theta)
        \equiv 0 \mod 2\pi.
    \end{align*}
    We show by induction that for all $k \in \N_0$,
    $\arg[c_k] \equiv k\arg[c_1] \mod 2\pi$.
    \begin{itemize}[label=\sbt]
        \item $k = 0$ or $1$: It is immediate (see Step 1 for $k = 0$). \medskip

        \item Assume that the formula is valid for some nonnegative integers $k$ and $k+1$. Since $\arg{[c_k]} - 2\arg{[c_{k+1}]} + \arg{[c_{k+2}]} \equiv 0 \mod 2\pi$,
        using induction hypothesis, we have 
        \[ k\arg[c_1] - 2(k+1)\arg[c_1] + \arg[c_{k+2}] \equiv 0 \mod 2\pi. \]
        After rearranging the terms, we obtain $\arg[c_{k+2}] \equiv (k+2)\arg[c_1] \mod 2\pi$.
    \end{itemize}
    In the same way, for all $k \in \N$, $\arg[c_{-k}] \equiv k\arg[c_{-1}] \mod 2\pi$. \medskip

    \uline{Step 5}: We have shown that under the assumption $f_1 = c_1 e_1$, for some $\abs{c_1} = 1$,
    \begin{itemize}[label=$\star$]
        \item For all $k \in \Z$, $f_k = c_k e_k$ (Step 3),
        \item For all $k \in \N$, $c_k = c_1^k$ and $c_{-k} = c_{-1}^k$ (Step 4).
    \end{itemize}
    The only thing left to show is that $c_{-1} = 1/c_1$. Indeed,
    \begin{align*} 
        \sup_{z \in \partial K_n} \abs{(f_{-1} + f_0 + f_1)(z)}
        & = \sup_{z \in \partial K_n} \abs{(c_{-1} e_{-1} + e_0 + c_1 e_1)(z)} \\
        & = \sup_{z \in \partial K_n} \abs{\frac{c_{-1}}{z} + 1 + c_1 z} \\
        & = \sup_{z \in \partial K_n} \abs{\frac{1}{z} + 1 + z}
        = \frac{1}{R_n} + 1 + R_n.
    \end{align*}
    We have two choices. If the maximum is attained on $R_n\T$, then
    \[ \sup_{\abs{z} = R_n} \abs{\frac{c_{-1}}{z} + 1 + c_1 z}
        = \frac{1}{R_n} + 1 + R_n. \]
    However, we have equality if and only if $\arg(c_{-1}/z) \equiv \arg(1) \equiv \arg(c_1 z) \mod 2\pi$, that is
    \[ 0 \equiv \arg(c_{-1}) - \arg(z) \equiv \arg(c_1) + \arg(z) \mod 2\pi. \]
    Hence, $\arg(z) \equiv \arg(c_{-1}) \equiv -\arg(c_1) \mod 2\pi$, so
    $c_{-1} = 1/c_1$. It is the same if the maximum is attained on $R_n^{-1}\T$. \medskip

    \fbox{Conclusion:}  For all $k \in \Z$, $f_k = c_1^k e_k = (c_1 e_1)^k = f_1^k$. Hence, $T$ is a composition operator, with symbol $\varphi = f_1 = c_1 e_1$ or $c_1 e_{-1}$, by Proposition \ref{Prop - Carac comp op}.
    Finally, multiplying by the $\alpha$ of Step 1, for all $f \in \Hol(\A)$,
    \[ (Tf)(z) = \alpha f(c_1 z) \text{ or } \alpha f(c_1/z). \qedhere \]
\end{proof}

\begin{rem}
    The proof of the main theorem only needs three different seminorms: $\norm \cdot_{\infty, 1}$, and two among $\norm \cdot_{\infty, 2}$, $\norm \cdot_{\infty, 3}$ and $\norm \cdot_{\infty, 4}$. In other words, we can rewrite the main theorem with weaker assumptions:

    Let $T : \Hol(\A) \to \Hol(\A)$ a linear and continuous operator such that for all $f \in \Hol(\A)$ and $n \in \{1, 2, 3, 4\}$,
    \[ \norm{T(f)}_{\infty, n} = \norm{f}_{\infty, n}
        = \sup_{z \in K_n} \abs{f(z)}. \]
    Then there exist two constants $\alpha, \beta \in \T$ such that $T = T_{\alpha, \beta}$ or $S_{\alpha, \beta}$.
\end{rem}

\section{Description of unimodular functions on the unit circle}\label{Sec - Unimodular maps unit circle}

There exist many approximation results for measurable unimodular functions on the unit circle such as 
the Helson--Sarason Theorem \cite{HS67} which says that given $\varepsilon > 0$ and $f$ unimodular and continuous on $\T$, there exist finite Blaschke products $B_1$ and $B_2$ such that 
\[ \norm{f-\frac{B_1}{B_2}}_\infty < \varepsilon.\]
In this section we investigate factorisation results instead of approximation results for unimodular function on the unit circle. The first one is due to Fatou.   
    
\begin{theo}[\cite{Fatou23}]
    If $f$ is analytic on $\D$ and $\lim_{|z|\to 1}|f(z)| = 1$,
    then $f$ is a finite Blaschke product. 
    In particular, if $f$ is a function in the disc algebra $A(\D)$, then $f$ is a finite Blaschke product whenever $f$ is unimodular on $\T$.
\end{theo}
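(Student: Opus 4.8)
The plan is to reduce, via the maximum modulus principle, the whole statement to the single assertion that a zero-free analytic function on $\D$ whose modulus tends to $1$ at the boundary must be a unimodular constant. First I would observe that the hypothesis $\lim_{\abs{z}\to 1}\abs{f(z)}=1$ forces $f$ to be bounded: there is $r_0\in(0,1)$ with $\abs{f(z)}<2$ on the annular region $r_0<\abs{z}<1$, and then the maximum modulus principle bounds $\abs{f}$ on the compact disc $\abs{z}\le r_0$ as well, so $f\in H^\infty(\D)$. Shrinking $r_0$ so that moreover $\abs{f}>1/2$ near the boundary confines all zeros of $f$ to the compact set $\{\abs{z}\le r_0\}$; since $f$ is not identically zero (its modulus tends to $1$), its zeros are isolated, hence finite in number, say $a_1,\dots,a_N$ listed with multiplicity.

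Next I would form the finite Blaschke product $B(z)=\prod_{j=1}^N\frac{z-a_j}{1-\overline{a_j}z}$, which has exactly the same zeros as $f$ with the same multiplicities and satisfies $\abs{B(z)}\to 1$ as $\abs{z}\to 1$ (each factor is continuous up to $\T$ with unit modulus there, its pole $1/\overline{a_j}$ lying outside $\overline\D$). The quotient $g=f/B$ is then analytic and zero-free on $\D$, and $\abs{g(z)}=\abs{f(z)}/\abs{B(z)}\to 1$ as $\abs{z}\to 1$. It remains only to show that $g$ is a unimodular constant.

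For this I would apply the maximum modulus principle to both $g$ and $1/g$, which is legitimate precisely because $g$ does not vanish. Given $\varepsilon>0$, pick $r$ close to $1$ so that $\abs{g}<1+\varepsilon$ on $\{\abs{z}=r\}$; then $\abs{g}\le 1+\varepsilon$ on all of $\{\abs{z}\le r\}$, and letting $r\to 1$ and $\varepsilon\to 0$ gives $\abs{g}\le 1$ throughout $\D$. The identical argument applied to $1/g$ yields $\abs{g}\ge 1$, so $\abs{g}\equiv 1$ on $\D$; by the open mapping theorem a non-constant holomorphic function cannot have constant modulus, whence $g$ is a constant $c\in\T$ and $f=cB$ is a finite Blaschke product. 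The final ``in particular'' clause is then immediate: if $f\in A(\D)$ is unimodular on $\T$, continuity on $\overline\D$ gives $\lim_{\abs{z}\to 1}\abs{f(z)}=1$, so the first part applies. I expect the only delicate point to be the passage to the boundary in the maximum modulus step, where one must argue through the limit $r\to 1$ rather than evaluate on $\T$ directly, since a priori $g$ need not extend continuously to the boundary.
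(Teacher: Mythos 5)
Your argument is correct and is the standard proof of Fatou's theorem; the paper itself gives no proof, simply citing \cite{Fatou23}, so there is nothing to compare against. Each step you flag as delicate (the finiteness of the zero set, the passage $r\to 1$ in the maximum modulus step for $g$ and $1/g$, and the open mapping argument forcing $g$ to be a unimodular constant) is handled properly.
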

 
There is also a meromorphic version of this.

\begin{coro}[\cite{GMR17}]\label{cor:mero}
    Suppose that $f$ is meromorphic on $\D$ and extends continuously to $\T$, the unit circle. If $f$ is unimodular on $\T$, then $f$ is a quotient of two finite Blaschke products.
\end{coro}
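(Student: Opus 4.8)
The plan is to reduce the meromorphic statement to the analytic case already supplied by Fatou's theorem, by clearing the poles of $f$ with a suitable finite Blaschke product.

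First I would argue that $f$ has only finitely many poles in $\D$. Since $f$ is unimodular and continuous on $\T$, it is bounded near the unit circle, so no pole can accumulate at a point of $\T$; as the poles of a meromorphic function are isolated in $\D$, they are thereby confined to a compact subset of $\D$ and hence finite in number. Call them $p_1, \dots, p_m$, repeated according to multiplicity, and let $B_2$ be the finite Blaschke product whose zeros are exactly the $p_j$ with matching multiplicities.

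Next, set $g = f \cdot B_2$. At each $p_j$ the zero of $B_2$ cancels the pole of $f$ of the same order, so the singularities of $g$ are removable and $g$ extends to an element of $\Hol(\D)$. Since $f$ and $B_2$ are both continuous on $\T$ and $B_2$ has no poles there, $g$ extends continuously to $\T$, that is, $g$ lies in the disc algebra $A(\D)$; moreover $\abs{B_2} = 1$ on $\T$ gives $\abs{g} = \abs{f} = 1$ there, so $g$ is unimodular on $\T$. The disc-algebra form of Fatou's theorem then applies and yields a finite Blaschke product $B_1$ with $g = B_1$.

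Finally, $f = g/B_2 = B_1/B_2$ on $\D$, which is the desired quotient of two finite Blaschke products. The only genuinely delicate point is the finiteness of the pole set: one must verify that continuity together with unimodularity on $\T$ forbids poles from accumulating at the boundary, which is precisely where the hypothesis of continuous extension (rather than mere meromorphy) does the essential work.
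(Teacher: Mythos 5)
Your proof is correct and is precisely the standard argument one would expect here: the paper itself gives no proof of this corollary, merely citing \cite{GMR17}, and your reduction --- poles cannot accumulate at $\T$ because the continuous unimodular boundary extension forces $f$ to be bounded on an annulus $r<|z|<1$, so finitely many poles can be cleared by a finite Blaschke product $B_2$, after which the disc-algebra form of Fatou's theorem gives $fB_2=B_1$ --- is exactly the intended deduction from the preceding theorem. You also correctly identify the one delicate point (finiteness of the pole set), so nothing is missing.
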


In 1986, Bourgain solved a problem of factorization posed by Douglas and Rudin.
\begin{theo}[\cite{Bourgain86,Barclay}]\label{th:bourgain}
    Let $f$ be a bounded measurable function on the unit circle $\T$.
    Then the following assertions are equivalent: 
    \begin{enumerate}
        \item The function $\log |f|$ belongs to $L^1(\T)$,
        \item There exist $g, h \in H^\infty$ such that $f = g \overline{h}$. 
    \end{enumerate}
     
    In particular, any measurable function unimodular almost everywhere on the unit circle is of the form $f=g\overline{h}$, for some functions $g, h \in H^\infty$.
\end{theo}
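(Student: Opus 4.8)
The plan is to prove the equivalence directly, the two implications being of very different difficulty, and then read off the \enquote{in particular} statement as the key special case. The implication $(2)\Rightarrow(1)$ is elementary: if $f=g\overline h$ with $g,h\in H^\infty$ not identically zero, then $\abs f=\abs g\,\abs h$ a.e.\ on $\T$, so $\log\abs f=\log\abs g+\log\abs h$. For a nonzero $H^\infty$ function $g$ one has $\log\abs g\in L^1(\T)$: its positive part is bounded by $\log\norm g_\infty$, while integrability of the negative part is the classical fact that a nonzero function in the Nevanlinna class has summable logarithm on the circle (via Jensen's inequality). Adding the two contributions gives $\log\abs f\in L^1(\T)$.

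For the substantive implication $(1)\Rightarrow(2)$ I would first separate the modulus from the argument. Since $\log\abs f\in L^1(\T)$, form the outer function
\[ F(z)=\exp\!\left(\frac{1}{2\pi}\int_{0}^{2\pi}\frac{e^{i\theta}+z}{e^{i\theta}-z}\,\log\abs{f(e^{i\theta})}\,d\theta\right). \]
Because $\log\abs f\le\log\norm f_\infty$ is bounded above, $F\in H^\infty$; moreover $F$ is zero-free on $\D$ and has boundary modulus $\abs F=\abs f$ a.e. In particular $f\ne0$ a.e., so $u:=f/F$ is well defined and unimodular a.e.\ on $\T$. If the unimodular function $u$ admits a factorisation $u=g_0\overline{h_0}$ with $g_0,h_0\in H^\infty$, then $f=uF=(Fg_0)\overline{h_0}$ with $Fg_0,\,h_0\in H^\infty$, which is the desired conclusion. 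This reduces the theorem to the \enquote{in particular} assertion, namely the factorisation of an arbitrary unimodular $u\in L^\infty(\T)$.

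To attack the unimodular case, the tempting first move is to take logarithms. After multiplying $u$ by a suitable unimodular quotient of inner functions one may assume $u=e^{i\psi}$ with $\psi$ real and $\norm\psi_\infty$ small; letting $\tilde\psi$ denote the conjugate function and $\Phi$ the analytic function on $\D$ with boundary values $\psi+i\tilde\psi$, one checks that $g=e^{i\Phi/2}$ and $h=e^{-i\Phi/2}$ satisfy $g\overline h=u$ on $\T$. The trouble is that $g,h\in H^\infty$ precisely when $\tilde\psi\in L^\infty(\T)$, and the conjugation operator is \emph{not} bounded on $L^\infty$; so this naive construction fails in general and cannot be rescued by a single logarithm.

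The correct route, and where essentially all the difficulty lies, is to replace this exact-logarithm step by an approximation-and-iteration scheme. The Douglas--Rudin theorem guarantees that quotients $I\overline J$ of inner functions are $L^\infty$-dense among unimodular functions, so one can choose inner $I_1,J_1$ with $\norm{u-I_1\overline{J_1}}_\infty\le\delta<1$ and pass to the unimodular $u_1:=u\,\overline{I_1}J_1$ with $\norm{u_1-1}_\infty\le\delta$; iterating and assembling the accumulated inner factors should yield two products converging in $H^\infty$ whose boundary product equals $u$. The main obstacle is exactly the convergence of this iteration, which does not hold for free because of the same unbounded-conjugation phenomenon: making it work requires either Bourgain's original quantitative $L^\infty$ estimates or Barclay's constructive refinement of the Douglas--Rudin step, in which the approximating inner functions are produced with enough control on their sizes that the telescoping products stay bounded. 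All the reductions above are routine; this last quantitative control is the heart of the theorem.
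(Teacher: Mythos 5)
This theorem is imported by the paper from the literature (Bourgain, and Barclay for the matrix-valued refinement); the paper offers no proof of its own, so there is nothing internal to compare your argument against. Judged on its own terms, your proposal is correct as far as it goes but is not a proof. The implication $(2)\Rightarrow(1)$ via $\log\abs{f}=\log\abs{g}+\log\abs{h}$ and the $L^1$-integrability of $\log\abs{g}$ for nonzero $g\in H^\infty$ is fine (modulo the implicit convention that $g,h$ are not identically zero). The reduction of $(1)\Rightarrow(2)$ to the unimodular case, by dividing out the outer function $F$ with $\abs{F}=\abs{f}$ a.e.\ on $\T$ (bounded because $\log\abs{f}$ is bounded above), is also correct and standard.

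The gap is exactly where you say it is, and it is the entire content of the theorem: the factorisation $u=g\overline{h}$ of an arbitrary unimodular $u\in L^\infty(\T)$. Your sketch --- approximate $u$ by a quotient $I_1\overline{J_1}$ of inner functions via Douglas--Rudin, pass to $u_1=u\,\overline{I_1}J_1$ with $\norm{u_1-1}_\infty\le\delta$, iterate, and assemble the inner factors --- describes the correct strategy, but you give no argument that the accumulated products converge to functions in $H^\infty$ (an infinite product of inner functions need not converge, and the residual errors must be absorbed with quantitative control on the sizes of the approximants). You acknowledge this yourself and defer to Bourgain's $L^\infty$ estimates or Barclay's constructive Douglas--Rudin step. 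That deferral is honest, but it means the proposal proves only the routine reductions and cites the hard core rather than establishing it; as a self-contained proof of the stated theorem it is incomplete at its central step.
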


The following theorem is a complete description of unimodular functions on the unit circle that are analytic in a neighbourhood of $\T$.

\begin{theo}\label{th:newfacto}
    Let $f$ be an analytic map on $\A$. Then the following assertions are equivalent:
    \begin{enumerate}
        \item The function $f$ is unimodular on $\T$,
        \item there exists $s \in (1, R]$, $n \in \Z$ and an outer function $g_0^s$ in $H^\infty$ which extends analytically on $s\D$ such that 
        \[ f(z) = z^n
            \frac{g_0^s(z)}{\overline{{g_0^s(1/\overline{z})}}} \quad
            \text{ for all } z \in \A_s := \{z \in \C : 1/s < \abs{z} < s\}. \]     	
    \end{enumerate}	 
\end{theo}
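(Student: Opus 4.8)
The implication $(2)\Rightarrow(1)$ is the routine one, so I would dispatch it first. Writing $g^*(z):=\overline{g_0^s(1/\overline z)}$, the function $g^*$ is analytic on $\{\abs z>1/s\}$, and on $\T$ one has $1/\overline z=z$, so that $g^*(z)=\overline{g_0^s(z)}$ there. Since $g_0^s$ is outer and extends analytically across $\T$ it has no zeros on $\overline\D$, hence $g^*$ is zero-free on $\T$ and
\[ \abs{f(z)}=\abs{z}^n\,\frac{\abs{g_0^s(z)}}{\abs{\overline{g_0^s(z)}}}=1,\qquad z\in\T. \]
Thus $f$ is unimodular on $\T$.

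For $(1)\Rightarrow(2)$ the plan is to produce the outer factor as the exponential of the analytic half of $\log f$. First, since $f$ is continuous and unimodular on $\T$, it is zero-free on some annular neighbourhood of $\T$; I would fix $s\in(1,R]$ so that $f$ does not vanish on $\A_s$. Let $n\in\Z$ be the winding number $\tfrac{1}{2\pi i}\oint_{\T}f'/f$, which is independent of the circle chosen in $(1/s,s)$ by homotopy invariance in the zero-free region, and set $F(z)=z^{-n}f(z)$. Then $F$ is analytic, zero-free, and of winding number $0$ on $\A_s$, so $F'/F$ has vanishing period around $\T$ and $\log F$ admits a single-valued analytic branch there; I would expand it as a Laurent series $\log F(z)=\sum_{k\in\Z}a_k z^k$ on $\A_s$.

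The key step is to exploit unimodularity. On $\T$ we have $\Re(\log F)=\log\abs{F}=0$, and matching Fourier coefficients in $\log F(e^{i\theta})+\overline{\log F(e^{i\theta})}=0$ forces the symmetry $a_{-k}=-\overline{a_k}$ for every $k\ge 0$. I then set $g:=\exp\bigl(\tfrac12 a_0+\sum_{k\ge 1}a_k z^k\bigr)$. The analytic part of a Laurent series convergent on $\A_s$ extends to $\{\abs z<s\}$, so $g$ is analytic and zero-free on $s\D$; it is bounded on $\D$ and $\log\abs g=\Re(\log g)$ is harmonic, so the mean value property gives Jensen equality and $g$ is outer in $H^\infty$. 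Writing $g^*(z)=\overline{g(1/\overline z)}=\exp\bigl(\overline{\tfrac12 a_0}+\sum_{k\ge 1}\overline{a_k}z^{-k}\bigr)$ and substituting $a_{-k}=-\overline{a_k}$, a direct comparison of Laurent coefficients yields $\log g-\log g^*=\log F$ on $\A_s$, hence $g/g^*=F$ and $f(z)=z^n\,g(z)/g^*(z)$ with $g_0^s=g$, as required.

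The hard part is the bookkeeping around the single-valued logarithm: one must peel off the integer winding number $z^n$ before $\log f$ can be defined globally on the annular neighbourhood, and then verify that the positive-frequency half of $\log F$ genuinely defines an $H^\infty$ outer function rather than a merely formal series. The symmetry $a_{-k}=-\overline{a_k}$, coming precisely from $\abs F=1$ on $\T$, is exactly what lets the reflection $g^*$ absorb the negative-frequency half, so that the factorisation closes up with no residual term; confirming this symmetry and its compatibility with $z\mapsto 1/\overline z$ is the crux. Everything else — convergence on $s\D$, outerness via the mean value property, and the direction $(2)\Rightarrow(1)$ — is routine.
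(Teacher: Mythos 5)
Your proof is correct and follows essentially the same route as the paper: both isolate the winding number $z^n$ (this is the paper's Lemma \ref{lem:key1}), split the resulting single-valued logarithm into its positive- and negative-frequency Laurent halves, and exponentiate the analytic half to obtain the outer factor; your coefficient symmetry $a_{-k}=-\overline{a_k}$ is just the Fourier-side version of the paper's observation that $\abs{g_0}\abs{g_1}=1$ on $\T$ forces $g_1=1/g_0$. (One harmless slip: an outer function extending analytically past $\T$ may still vanish on $\T$, e.g.\ $1-z$, but continuity of $\abs{f}$ disposes of this in the direction $(2)\Rightarrow(1)$.)
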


For example, a Blaschke factor $f(z)=(z-\alpha)/(1-\overline\alpha z)$ with $|\alpha|<1$ arises on taking
$g_0^s(z)=1/(1-\overline\alpha z)$ with $n=1$ and $1<s<1/|\alpha|$. \medskip

Here is an example showing that the set of unimodular functions on $\T$ and analytic in a neighborhood of $\T$ is much larger than the set of quotients of finite Blaschke products.

Let $f(z) = \exp(z-1/z)$. Then $f(\xi) = \exp(\xi-\overline{\xi}) = \exp(2i\Im (\xi))$ for all $\xi \in \T$. 
It follows that $f$ is unimodular on $\T$ and analytic on $\C^*$. 
In this case we have $s = R$, $n = 0$ and $g_0(z) = \exp(z)$. \medskip

Our proof relies on Theorem \ref{th:bourgain} and the following lemma. 
\begin{lem}\label{lem:key1}
    Let $s > 1$. Suppose that $g$ is analytic and does not vanish in the annulus $\A_s = \{z \in \C : 1/s < \abs z < s\}$.
    Then there exist $n \in \Z$ and $h$ analytic in $\A_s$ such that 
    \[ g(z) = z^n \exp(h(z)) \quad \text{ for all } z \in \A_s. \]
\end{lem}
 
\begin{proof}
    Since $\A_s$ is not simply connected, the existence of an analytic map $h$ in $\A_s$ such that $g(z) = \exp(h(z))$ is not necessarily true. 
    Now recall that the existence of $h$ such that $g = \exp(h)$ on a domain $\Omega$ is equivalent to 
    \[ \int_C \frac{g'(z)}{g(z)} \dd z = 0 \quad
        \text{ for all Jordan curves } C \text{ in } \Omega. \]
    For $\Omega = \A_s$, this is equivalent to 
    \[ \int_{C_t} \frac{g'(z)}{g(z)} \dd z = 0 \quad
        \text{ for all } t \in (1/s,s), 
        \text{ where } C_t = t\T.\]
     Note that $\int_{C_t} g'(z)/g(z) \; \dd z \in 2i\pi\Z$, and therefore there exists $n\in \Z$ such that 
     \[ \int_{C_t} \frac{g'(z)}{g(z)} \dd z = 2i\pi n. \]
     It follows that $\tilde g (z) := g(z)/z^n$ satisfies 
      \[ \int_{C_t} \frac{\tilde{g}'(z)}{\tilde{g}(z)} \dd z = 0 \quad
            \text{ for all } t \in (1/s,s). \]
     Finally, $g(z) = z^n \exp (h(z))$ for all $z \in \A_s$.
\end{proof}

\begin{proof}[Proof of Theorem \ref{th:newfacto}]
    Since $f$ is analytic in $\A$ and unimodular on the unit circle, there exists $s \in (1, R]$ such that $f(z) \neq 0$ for all $z \in \A_s$.
    By Lemma \ref{lem:key1}, there exist $n \in \Z$ and $h$ analytic in $\A_s$ such that
    \[ f(z) = z^n \exp(h(z)) \text{ for all } z \in \A_s. \]
    
    Note that $h(z) = h_+(z) + h_{-}(z)$, where $h_+$ is analytic on $s\D$, $h_{-}$ is analytic on $\C \backslash s^{-1}\D$ and $h_+(0) = h(0)$. 
    It follows that $g_0 := \exp(h_+)$ is an outer function which extends analytically on $s\D$. Indeed, $\exp(h_+)$ has no zero in $\D$ and moreover, since it extends analytically on $s\D$, its singular inner part is trivial. In addition,
    \[ g_1(z) := \begin{cases}
        \overline{\exp(h_-(1/\overline z))}, & 0 < \abs{z} < s \\
        1, & z = 0
    \end{cases} \]
    is also analytic on $s\D$ and outer. Therefore, we have
    \[ f(z) = z^n g_0(z) \overline{g_1(1/\overline z)}, \quad z \in s\D. \]

    Since $f$ is unimodular on $\T$, we get $\abs{g_0(\xi)}\abs{g_1(\xi)} = 1$ for all $\xi \in \T$. It follows that the outer functions $g_0$ and $g_1$ satisfy $g_1(z) = 1/g_0(z)$, so for all $z \in \A_s$,
    \[ f(z) = z^n \frac{g_0(z)}{\overline{{g_0(1/\overline{z})}}}. \]

    The reverse implication is immediate, since for all $z \in \T$, $1/\overline z = z$.
\end{proof}

\section{Spectral study of linear isometries on the annulus}\label{Sec - Spectrum}

In this section we consider the spectrum $\sigma$ and point spectrum $\sigma_p$ of the operators
$T_{\alpha,\beta}$ and $S_{\alpha,\beta}$ defined in \eqref{eq:tabsab}.
It will be convenient to begin with the simplest of these, namely $S_{\alpha,\beta}$,
where $S_{\alpha,\beta}f(z)=\alpha f(\beta/z)$.

\begin{theo}
For $|\alpha|=|\beta|=1$ we have $\sigma(S_{\alpha,\beta})=\sigma_p(S_{\alpha,\beta})=\{\alpha,-\alpha\}$. Moreover, $\ker (S_{\alpha,\beta}-\lambda \Id)$ is infinite-dimensional
for $\lambda \in \{\alpha,-\alpha\}$.
\end{theo}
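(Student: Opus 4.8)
The plan is to exploit the fact that $S_{\alpha,\beta}$ is, up to the scalar $\alpha$, a composition operator with an involutive symbol; write $S = S_{\alpha,\beta}$. First I would record that the map $z \mapsto \beta/z$ sends $\A$ into itself: since $\abs{\beta} = 1$ we have $\abs{\beta/z} = 1/\abs{z}$, and $1/R < \abs{z} < R$ forces $1/R < \abs{\beta/z} < R$ precisely because $\A$ is symmetric with respect to $\T$. Thus $S$ is a well-defined continuous operator on $\Hol(\A)$. The crucial computation is then
\[
    S^2 f(z) = \alpha\, (S f)(\beta/z) = \alpha^2 f\bigl(\beta/(\beta/z)\bigr) = \alpha^2 f(z),
\]
so that $S^2 = \alpha^2 \Id$, \emph{independently of $\beta$}. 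This single identity drives everything.

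From it the spectral inclusion is immediate. For any $\lambda \in \C$ one has the commuting factorisation
\[
    (S - \lambda \Id)(S + \lambda \Id) = S^2 - \lambda^2\Id = (\alpha^2 - \lambda^2)\Id.
\]
Hence if $\lambda \notin \{\alpha, -\alpha\}$, then $\alpha^2 - \lambda^2 \neq 0$, and $(\alpha^2-\lambda^2)^{-1}(S+\lambda\Id)$ is a two-sided inverse of $S - \lambda\Id$; being a polynomial in the continuous operator $S$, it is automatically continuous. Therefore such $\lambda$ lies in the resolvent set, giving $\sigma(S) \subseteq \{\alpha,-\alpha\}$.

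It then remains to show that both $\alpha$ and $-\alpha$ are eigenvalues with infinite-dimensional eigenspaces, which yields $\{\alpha,-\alpha\} \subseteq \sigma_p(S) \subseteq \sigma(S) \subseteq \{\alpha,-\alpha\}$ and closes the argument. Here I would build explicit eigenfunctions from the monomials $e_k(z)=z^k$, available for every $k \in \Z$ precisely because the domain is an annulus rather than a disc. Since $S e_k = \alpha\beta^k e_{-k}$, setting for $k \ge 1$
\[
    u_k = \tfrac12\bigl(e_k + \beta^k e_{-k}\bigr), \qquad
    v_k = \tfrac12\bigl(e_k - \beta^k e_{-k}\bigr),
\]
a direct check using $S e_{-k} = \alpha\beta^{-k}e_k$ gives $S u_k = \alpha u_k$ and $S v_k = -\alpha v_k$. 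For $k \ge 1$ the $u_k$ (respectively the $v_k$) are nonzero and linearly independent, as each involves a distinct pair $\{e_k, e_{-k}\}$, so both $\ker(S-\alpha\Id)$ and $\ker(S+\alpha\Id)$ are infinite-dimensional. (Equivalently, $\tfrac12(\Id \pm \alpha^{-1}S)$ are the spectral projections onto these two eigenspaces.)

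I do not anticipate a serious obstacle: the entire proof rests on the involution identity $S^2 = \alpha^2\Id$. The only point demanding mild care is the meaning of the spectrum in the Fréchet (non-Banach) setting — one must ensure that $S-\lambda\Id$ is not merely bijective but has a \emph{continuous} inverse for $\lambda \notin \{\alpha,-\alpha\}$ — and this is handled for free by exhibiting that inverse explicitly as a polynomial in $S$.
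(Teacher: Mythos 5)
Your proof is correct and follows essentially the same route as the paper: the identity $S_{\alpha,\beta}^2=\alpha^2\Id$ with the factorisation $(S-\lambda\Id)(S+\lambda\Id)=(\alpha^2-\lambda^2)\Id$ to bound the spectrum, and the eigenfunctions $\tfrac12(e_k\pm\beta^k e_{-k})$, which are exactly the paper's $z^k\pm\beta^k/z^k$ up to a scalar. No substantive differences.
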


\begin{proof}
For $f(z)=z^k \pm \beta^k/z^k$ with $k \in \NN$ we have 
\[
S_{\alpha,\beta}f(z)=  \alpha \beta^k/z^k \pm \alpha \beta^k/(\beta^k/z^k)= \pm\alpha f(z),
\]
 and so we see that $\{ \alpha, -\alpha\} \subseteq \sigma_p(S_{\alpha,\beta})$
 and   the eigenspaces are infinite-dimensional.
 
 Moreover, $S_{\alpha,\beta}^2 = \alpha^2 \Id$, and so
 \[
 (S_{\alpha,\beta}-\lambda\Id)(S_{\alpha,\beta}+\lambda \Id)= S_{\alpha,\beta}^2 - \lambda^2 \Id=(\alpha^2-\lambda^2)\Id,
 \]
 and thus $S_{\alpha,\beta}-\lambda \Id$ is invertible if $\lambda^2 \ne \alpha^2$.
 That is, \[ \sigma_p(S_{\alpha,\beta})=\sigma(S_{\alpha,\beta})=\{\alpha,-\alpha\}. \qedhere \]
\end{proof}

We now turn our attention to $T_{\alpha,\beta}$.

\begin{theo}\label{thm:spectrum}
    Let $\alpha, \beta \in \T$ and $T_{\alpha, \beta} : \Hol(\A) \to \Hol(\A)$ defined by
    \[ T_{\alpha, \beta}(f)(z) = \alpha f(\beta z), \quad f \in \Hol(\A). \]

    \begin{enumerate}
        \item If there exists $n \in \N$ such that $\beta^n = 1$, then
        \[ \sigma_p(T_{\alpha, \beta}) = \sigma(T_{\alpha, \beta})
            = \{\alpha \beta^k : 0 \le k \le n-1\}. \]
        Moreover, for all $\lambda \in \sigma_p(T_{\alpha, \beta})$, $\Ker(T_{\alpha, \beta} - \lambda \Id)$ has infinite dimension.

        \item If for all $n \in \Z$, $\beta^n \neq 1$, then
        \[ \sigma_p(T_{\alpha, \beta}) 
            = \{\alpha \beta^k : k \in \Z\}
            \subset \sigma(T_{\alpha, \beta}) \subset \T. \]
        Moreover, for all $\lambda \in \sigma_p(T_{\alpha, \beta})$, $\Ker(T_{\alpha, \beta} - \lambda \Id)$ has dimension 1. 
    \end{enumerate}
\end{theo}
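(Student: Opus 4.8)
First I would diagonalise $T_{\alpha,\beta}$ through the Laurent expansion. Every $f\in\Hol(\A)$ has a unique Laurent development $f=\sum_{k\in\Z}a_k e_k$ converging uniformly on each $K_n$, and since $T_{\alpha,\beta}e_k=\alpha\beta^k e_k$, the operator acts on coefficients by $a_k\mapsto\alpha\beta^k a_k$. Consequently $T_{\alpha,\beta}f=\lambda f$ is equivalent to $(\alpha\beta^k-\lambda)a_k=0$ for every $k\in\Z$. This single observation yields the point spectrum in both cases: $\lambda$ is an eigenvalue precisely when $\lambda=\alpha\beta^k$ for some $k$, and the associated eigenspace is the closed linear span of $\{e_k:\alpha\beta^k=\lambda\}$. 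In case (i), where $\beta$ has finite order $d$ dividing $n$, the indices $k$ with $\alpha\beta^k=\lambda$ form a full residue class modulo $d$, hence are infinite in number, giving an infinite-dimensional kernel; in case (ii), where $\beta$ is not a root of unity, the values $\alpha\beta^k$ are pairwise distinct, so each eigenspace equals $\C e_k$, of dimension $1$.

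\textbf{Case (i): finite spectrum.} Here I would use the algebraic identity $T_{\alpha,\beta}^{\,d}=\alpha^d\Id$, which follows from $T_{\alpha,\beta}^{\,d}f(z)=\alpha^d f(\beta^d z)=\alpha^d f(z)$ once $\beta^d=1$. Since $\beta$ is a primitive $d$-th root of unity, $p(X)=X^d-\alpha^d=\prod_{k=0}^{d-1}(X-\alpha\beta^k)$ has simple roots and satisfies $p(T_{\alpha,\beta})=0$. For any $\lambda$ that is not a root of $p$, the division $p(X)=(X-\lambda)q(X)+p(\lambda)$ gives $(T_{\alpha,\beta}-\lambda\Id)\bigl(-q(T_{\alpha,\beta})/p(\lambda)\bigr)=\Id$, and since everything commutes this exhibits a two-sided continuous inverse. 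Hence $\sigma(T_{\alpha,\beta})\subseteq\{\alpha\beta^k:0\le k\le d-1\}$, and combined with the eigenvalue computation this forces $\sigma=\sigma_p=\{\alpha\beta^k:0\le k\le n-1\}$, the set being unchanged on replacing $d$ by the multiple $n$.

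\textbf{Case (ii): spectrum in $\T$.} The operator $T_{\alpha,\beta}$ is invertible, with inverse $T_{\alpha^{-1},\beta^{-1}}$, and both $T_{\alpha,\beta}^{\pm1}$ are isometric for every seminorm, because $\beta K_n=K_n$ yields $\norm{T_{\alpha,\beta}^{\pm m}f}_{\infty,n}=\norm{f}_{\infty,n}$ for all $m,n$. I would then run a Neumann series argument adapted to the Fréchet setting. For $|\lambda|>1$, write $T_{\alpha,\beta}-\lambda\Id=-\lambda(\Id-\lambda^{-1}T_{\alpha,\beta})$; since $\norm{(\lambda^{-1}T_{\alpha,\beta})^m f}_{\infty,n}=|\lambda|^{-m}\norm{f}_{\infty,n}$ is summable over $m$, the series $\sum_{m\ge0}(\lambda^{-1}T_{\alpha,\beta})^m$ converges in each seminorm to a continuous inverse. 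For $|\lambda|<1$ the same works from $T_{\alpha,\beta}-\lambda\Id=T_{\alpha,\beta}(\Id-\lambda T_{\alpha,\beta}^{-1})$ using the isometry of $T_{\alpha,\beta}^{-1}$. Thus no $\lambda$ with $|\lambda|\ne1$ lies in the spectrum, giving $\sigma(T_{\alpha,\beta})\subseteq\T$, while $\sigma_p=\{\alpha\beta^k:k\in\Z\}\subseteq\sigma$ is already established.

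\textbf{Main obstacle.} The delicate point is the Neumann series argument of case (ii): one must verify that convergence of $\sum_m(\lambda^{-1}T_{\alpha,\beta})^m$ in every seminorm really produces a continuous two-sided inverse on $\Hol(\A)$, so that $\lambda$ lies in the resolvent set in the correct Fréchet-space sense (bijectivity, with continuity of the inverse automatic by the open mapping theorem for Fréchet spaces). The isometry property $\norm{T_{\alpha,\beta}^{\pm m}f}_{\infty,n}=\norm{f}_{\infty,n}$ is precisely what makes the geometric estimate uniform across all seminorms simultaneously; this is the feature absent for a general operator and the reason the argument succeeds here.
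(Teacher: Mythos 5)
Your proposal is correct, and it reaches the conclusion by a partly different route. Case (i) is essentially the paper's argument: the same identity $T_{\alpha,\beta}^{\,n}=\alpha^n\Id$ and the same polynomial factorisation produce the inverse off $\{\alpha\beta^k\}$. The differences are in case (ii). For the eigenspaces, the paper first shows an eigenfunction $h$ cannot vanish (via density of the orbit $\{\beta^m z_0\}$ in $|z_0|\T$), then invokes Lemma \ref{lem:key1} to write $h(z)=z^j\exp(\theta(z))$ and argues that $\theta$ is constant; your observation that uniqueness of Laurent coefficients forces $(\alpha\beta^k-\lambda)a_k=0$ for all $k$ is shorter, handles both the identification of $\sigma_p$ and the dimension counts in one stroke, and avoids Lemma \ref{lem:key1} entirely. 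For $\sigma(T_{\alpha,\beta})\subset\T$, the paper solves $(T_{\alpha,\beta}-\lambda\Id)f=g$ coefficientwise, using $\bigl|1-|\lambda|\bigr|\le|\alpha\beta^k-\lambda|\le 1+|\lambda|$ to see that the radii of convergence are preserved; your Neumann series exploits instead the fact that $T_{\alpha,\beta}^{\pm1}$ is isometric for every seminorm, so that $\sum_m(\lambda^{-1}T_{\alpha,\beta})^m$ converges absolutely in each seminorm and yields a continuous two-sided inverse directly (the seminorm bound $\norm{Sf}_{\infty,n}\le(1-|\lambda|^{-1})^{-1}\norm{f}_{\infty,n}$ even makes the appeal to the open mapping theorem unnecessary). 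Both arguments are sound; the paper's is more explicit and hands-on, while yours is more structural and would apply verbatim to any operator that is isometric and invertible with isometric inverse for a generating family of seminorms.
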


\begin{proof} \;
    \begin{enumerate}
        \item Let $e_k(z) = z^k$, $0 \le k \le n-1$. Then,
        \[ T_{\alpha, \beta}(e_k)(z) = \alpha e_k(\beta z) 
                = \alpha (\beta z)^k = \alpha \beta^k z^k = \alpha \beta^k e_k(z). \]
        Therefore, $\alpha \beta^k \in \sigma_p(T_{\alpha, \beta})$, and $e_k \in \Ker(T_{\alpha, \beta} - \alpha \beta^k \Id)$. In addition, it is easy to show that for all $\ell \in \Z$, $e_{k+n\ell} \in \Ker(T_{\alpha, \beta} - \alpha \beta^k \Id)$, since $\beta^{k+n\ell} = \beta^k$, so the eigenspaces have infinite dimension. \medskip

        Moreover, note that $T_{\alpha, \beta}^n = \alpha^n \Id$, so
        \[ (\alpha^n - \lambda^n) \Id = T_{\alpha, \beta}^n - \lambda^n \Id 
            = (T_{\alpha, \beta} - \lambda \Id) 
                \sum_{k=1}^n \lambda^{k-1} T_{\alpha, \beta}^{n-k}. \]
        Hence, $T_{\alpha, \beta} - \lambda \Id$ is invertible if and only if $\lambda^n \neq \alpha^n$, i.e. $\lambda \not\in \{\alpha \beta^k : 0 \le k \le n-1\}$ since $\beta^n = 1$. \medskip
        
        \item Let $f \in \Hol(\A)$. Note that we can write
        \[ f(z) = \sum_{k \in \Z} a_k z^k 
                = a_0 + \sum_{k \ge 1} a_k z^k + \sum_{k \ge 1} a_{-k} z^{-k}, \]
        where the first series converges for $|z|<R$ and the second one converges for $|z|>1/R$. Then,
        \[ (T_{\alpha, \beta} - \lambda \Id)(f)(z) = 
            \sum_{k \in \Z} (\alpha \beta^k - \lambda) a_k z^k. \]
        
        Let $g = \sum_{k \in \Z} b_k z^k \in \Hol(\A)$. If $\lambda \not \in \T$, then consider, for $k \in \Z$, $a_k = \frac{b_k}{\alpha \beta^k - \lambda}$.
        Thus,
        \[ \frac{\abs{b_k}}{1 + \abs{\lambda}} < \abs{a_k} < \frac{\abs{b_k}}{\abs{1 - \abs{\lambda}}}. \]
        Hence, the power series determined by the sequences $(a_k)$, $(b_k)$, $(a_{-k})$ and $(b_{-k})$ have again the same radius of convergence, namely $R$. This means that $f = \sum_{k \in \ZZ} a_k z^k \in \Hol(\A)$. Moreover, by construction, $(T_{\alpha, \beta} - \lambda \Id)(f) = g$, so $(T_{\alpha, \beta} - \lambda \Id)$ is invertible: $\lambda \not\in \sigma(T_{\alpha, \beta})$. \medskip

        \noindent Now, let $e_k(z) = z^k$, $k \in \Z$. Then, $T_{\alpha, \beta}(e_k)(z) = \alpha e_k(\beta z) = \alpha \beta^k e_k(z)$, so we have $\alpha \beta^k \in \sigma_p(T_{\alpha, \beta})$, and $e_k \in \Ker(T_{\alpha, \beta} - \alpha \beta^k \Id)$. \medskip
        
        \noindent Let $h \in \Ker(T_{\alpha, \beta} - \alpha \beta^k \Id)$. Then, 
        \[ \alpha h(\beta z) = \alpha \beta^k h(z) \iff h(\beta z) = \beta^k h(z). \]
        This implies that $h(z) \neq 0$ for all $z \in \A$. Indeed, if $h(z_0) = 0$ for some $z_0 \in \A$, then for all $k \in \N$, $h(\beta^k z_0) = 0$. Since $\{\beta^k z_0 : k \in \N\}$ is a dense subset of $\abs{z_0}\T$, we obtain $h \equiv 0$. \medskip

        \noindent By Lemma \ref{lem:key1}, $h(z) = z^j \exp(\theta(z))$ for some $j \in \Z$ and $\theta \in \Hol(\A)$. 
        Denote $\psi = \exp(\theta)$. Then,
        \begin{align*}
            h(\beta z) = \beta^k h(z)
            & \iff \beta^j z^j \psi(\beta z) = \beta^k z^j \psi(z) \\
            & \iff \psi(\beta z) = \beta^{k-j} \psi(z).
        \end{align*} 
        Let $\sum c_n z^n$ be the Laurent series associated with $\psi$. Then,
        \[ \sum_{n \in \Z} c_n \beta^n z^n = \sum_{n \in \Z} c_n \beta^{k-j} z^n. \]
        Since the Laurent series is unique, we obtain $c_n = 0$ if $n \neq k-j$. Hence, $\psi(z) = c_{k-j} z^{k-j}$ for some $c_{k-j} \in \C$. However, if $k-j \neq 0$, then for all $r \in (1/R, R)$,
        \[ \int_{r\T} \frac{\psi'(z)}{\psi(z)} \dd z
            = \int_0^{2\pi} \frac{c_{k-j} (k-j) (re^{it})^{k-j-1}}
                {c_{k-j} (re^{it})^{k-j}} i re^{it} \dd t
            = 2i\pi(k-j) \neq 0. \]
        Therefore, in this case, $\psi$ cannot be written as $\exp(\theta)$. We conclude that $j=k$, so $\psi$ is constant, and $h \in \spam(e_k)$. This proves that $\Ker(T_{\alpha, \beta} - \alpha \beta^k \Id)$ has dimension 1. \qedhere
    \end{enumerate}
\end{proof}

The spectrum of $T_{\alpha, \beta}$ can be different from $\T$ if $\beta$ is aperiodic. We take our inspiration from the results on the disc \cite{ABBC-rot}. 

\begin{expl}
    Let $\tau > 2$. The set of \emph{Diophantine numbers of order $\tau$} is defined by
    \[ \mathcal D (\tau) = \left\{\xi \in \R : \exists \gamma > 0, \forall p \in \Z, \forall q \in \N, \abs{\frac p q - \xi} \ge \frac \gamma {q^{\tau}} \right\} \]

    Let $\xi \in \mathcal D (\tau)$, and $\beta = e^{2i\pi\xi}$. Then, we show that for all $r \in \Q \backslash \Z$, $\lambda = e^{2i\pi r} \not\in \sigma(T_{1, \beta})$. Since multiplying by $\alpha$ only rotates our problem, this is also true for $T_{\alpha, \beta}$, $\alpha \in \T$. \medskip

    Indeed, let $\gamma > 0$ such that $\abs{\xi - p/q} \ge \gamma q^{-\tau}$, for all $p \in \Z$ and $q \in \N$. Let $r = p_0/q_0$ and $k \in \N$. Choose $p \in \Z$ such that $\abs{\xi k - r - p} \le 1/2$. Then,
    \begin{align*}
        \abs{\beta^k - \lambda} & = \abs{e^{2i\pi\xi k} - e^{2i\pi r}} \\
            & = \abs{e^{2i\pi\xi k} - e^{2i\pi (r+p)}} \\
            & = \abs{e^{i\pi(\xi k - r - p)} - e^{-i\pi(\xi k - r - p)}} \\
            & = 2 \sin \abs{\pi(\xi k - r - p)}
            \ge 2 \cdot \frac 2 \pi \cdot \pi \abs{\xi k - r - p},
    \end{align*}
    since the sine function is concave on $[0, \pi/2]$ and $\abs{\pi(\xi k - r - p)} \in [0, \pi/2]$. Consequently,
    \[ \frac{1}{\abs{\beta^k - \lambda}}
        \le \frac{1}{4 \abs{\xi k - r - p}}
        = \frac{1}{4k} \frac{1}{\abs{\xi - \frac{p_0 + pq_0}{q_0 k}}}
        \le \frac{1}{4k} \frac{(q_0 k)^\tau}{\gamma}
        = c k^{\tau-1}, \;\; c := \frac{q_0^\tau}{4\gamma}. \]
        
    Finally,
    \begin{align*}
        1 = \lim_{k \to +\infty} \frac{1}{2^{1/k}}
        & \le \liminf_{k \to +\infty} \frac{1}{\abs{\beta^k - \lambda}^{1/k}} \\
        & \le \limsup_{k \to +\infty} \frac{1}{\abs{\beta^k - \lambda}^{1/k}}
        \le \limsup_{k \to +\infty} c^{1/k} k^{(\tau-1)/k} \le 1.
    \end{align*} 

    Hence, if $g(z) = \sum_{n \in \ZZ} a_n z^n \in \Hol(\A)$, then
    \[ f(z) := \sum_{n \in \Z} \frac{a_n}{\beta^n - \lambda} z^n \in \Hol(\A), \]
    and $f(\beta z) - \lambda f(z) = g(z)$. Finally, $T_{1, \beta} - \lambda \Id$ is surjective, and injective by Theorem \ref{thm:spectrum}. Thus, $\sigma(T_{1, \beta}) \subset \{e^{2i\pi s} : s \not\in \Q\} \cup \{1\}$.
\end{expl}

Likewise, it is possible for $\sigma_p(\Talbe)$ to be strictly contained in $\sigma(\Talbe)$.

\begin{expl}
    For our example, we take $\alpha = 1$.

    Let $p_1 = 1$ and for $n \ge 1$, $p_{n+1} = 2^{n^{p_n}}$.
    Then, the map $g$ defined by $g(z) = \sum_{n=1}^\infty (z/R)^{p_n}$ is in $\Hol(R\D) \subset \Hol(\A)$, where $\A = \{1/R < \abs{z} < R\}$. Moreover,
    \begin{itemize}[label=$\star$]
        \item For all $n \ge 1$, $p_{n+1} = 2^{n^{p_n}} \ge 2^{p_n} \ge p_n$.
        \item For all $n \ge 1$, $p_n$ is a power of two, hence an integer.
        \item For all $n \ge 1$, denote $p_n = 2^{q_n}$.
        Then, $q_{n+1} = n^{p_n} \ge p_n \ge q_n$, so
        \[ \frac{p_{n+1}}{p_n} = 2^{q_{n+1} - q_n} \in \N. \]
        Moreover, $q_{n+1} - q_n = n^{p_n} - q_n \ge n p_n - q_n \ge (n-1) p_n \ge n-1$, so $p_{n+1}/p_n \ge 2^{n-1}$.
    \end{itemize}
    Now we take 
    \[ \beta = \exp (i\pi\theta), \quad \hbox{where} \quad
        \theta = \sum_{n=1}^\infty \frac{1}{p_n}. \]
    Note that for all $n \in \N$, we may write
    \[ \theta = \frac{\ell_n}{p_n} + \varepsilon_n, \]
    where $\ell_n$ is an integer and
    \[ 0 < \varepsilon_n = \sum_{k \ge n+1} \frac{1}{p_k}
        = \frac{1}{p_{n+1}} \sum_{k \ge n+1} \frac{p_{n+1}}{p_k}
        \le \frac{1}{p_{n+1}} \sum_{k \ge n+1} \frac{1}{2^{k-2}}
        \le \frac{2}{p_{n+1}}. \]
    In addition, $p_{n+1} \ge p_n^{n+1}$ (this is immediate for $n = 1, 2$; when $n \ge 3$, it comes from the fact that if $p_n = 2^{q_n}$, then $(n+1) q_n \le (n+1) p_n \le n^{p_n} = q_{n+1}$). Therefore
    \[ 0 < \varepsilon_n \le \frac{2}{p_n^{n+1}} \le \frac{1}{p_n^n}. \]
    This means that $\theta$ is a Liouville (hence, irrational) number.
    Thus, we have $-1 \not \in \sigma_p(T_{\alpha, \beta})$. \medskip

    We claim that $-1 \in \sigma(T_{\alpha, \beta})$.
    To solve the equation $f(\beta z) + f(z) = g(z)$, with $g(z)=\sum_{k \ge 1} (z/R)^k$, we require
    $f(z) = \sum_{k \ge 1} \frac{1}{1+\beta^k} (z/R)^k$. However,
    \[ \beta^{p_n} = \exp(i\pi p_n \theta)
        = \exp\left( i\pi \sum_{k \ge n} \frac{p_n}{p_k} \right)
        = -\exp \left(i\pi \sum_{k \ge n+1} \frac{p_n}{p_k}\right), \]
    so we obtain
    \begin{align*}
        \abs{1 + \beta^{p_n}}
        = 2 \abs{\sin\left(\pi \sum_{k \ge n+1} \frac{p_n}{p_k}\right)}
        & \underset{n \to \infty}{\sim} 2 \pi \sum_{k \ge n+1} \frac{p_n}{p_k} \\
        & \underset{n \to \infty}{\sim} 2 \pi \frac{p_n}{p_{n+1}}
        \underset{n \to \infty}{\sim} 2 \pi \frac{p_n}{2^{n^{p_n}}}.
    \end{align*} 
    Looking at the coefficient $c_n = \frac{1}{\beta^{p_n}+1}$ of $(z/R)^{p_n}$ in $f$, we see that 
    \[ |c_n|^{1/p_n} \underset{n \to \infty}{\sim} 
            \frac{2^{\frac{1}{p_n} n^{p_n}}}{p_n^{1/p_n}}
        \xrightarrow[n \to \infty]{} \infty. \]
    The radius of convergence of the series for $f$ is then $0$, which implies that $-1$ is in the spectrum. Note that if we take $R = 1$ in the above estimations, then we obtain an example of composition operator induced by a rotation on the disc for which the point spectrum is different from the spectrum, thus providing an example that complements the results of \cite{ABBC-rot}.
\end{expl}

\end{document}